\documentclass[12pt,english]{amsart}
\usepackage[T1]{fontenc}
\usepackage[latin9]{inputenc}
\usepackage{mathrsfs}
\usepackage{amstext}
\usepackage{amsthm}
\usepackage{amssymb}

\makeatletter
\numberwithin{equation}{section}
\numberwithin{figure}{section}
\theoremstyle{plain}
\newtheorem{thm}{\protect\theoremname}
\theoremstyle{plain}
\newtheorem{prop}[thm]{\protect\propositionname}
\theoremstyle{remark}
\newtheorem{rem}[thm]{\protect\remarkname}
\theoremstyle{definition}
\newtheorem{example}[thm]{\protect\examplename}
\theoremstyle{plain}
\newtheorem{cor}[thm]{\protect\corollaryname}
\theoremstyle{plain}
\newtheorem*{thm*}{\protect\theoremname}

\usepackage{hyperref}

\AtBeginDocument{

}

\makeatother

\usepackage{babel}
\usepackage[style=numeric,backend=bibtex]{biblatex}
\providecommand{\corollaryname}{Corollary}
\providecommand{\examplename}{Example}
\providecommand{\propositionname}{Proposition}
\providecommand{\remarkname}{Remark}
\providecommand{\theoremname}{Theorem}

\begin{document}
\global\long\def\dT{\widehat{T}}%

\global\long\def\dG{\widehat{G}}%

\global\long\def\dB{\widehat{B}}%

\global\long\def\dg{\widehat{\mathfrak{g}}}%

\global\long\def\db{\widehat{\mathfrak{b}}}%

\global\long\def\slC{SL(2,\mathbb{C})}%

\global\long\def\da{\check{\mathfrak{a}}}%

\global\long\def\dS{\widehat{S}}%

\global\long\def\wA{\widehat{A}}%

\title{A simple construction of the automorphic residual spectrum}
\author{Devadatta G. Hegde}
\address{Tata Institute of Fundamental Research, Mumbai.}
\email{dghegde@math.tifr.res.in}
\begin{abstract}
We consider the spherical Borel Eisenstein series induced from the
trivial representation for a split semisimple linear algebraic group
over a number field. We prove that its regularization at the special
point corresponding to half the weighted marking of a distinguished
coadjoint nilpotent orbit in the Langlands dual Lie algebra is nonzero
and square-integrable. Our proof follows the philosophy of Kazhdan
and Okounkov. We give a geometric interpretation of Langlands' square-integrability
criterion in this setting and, using the equivariant integration formula,
prove that the regularization satisfies this criterion. As an immediate
consequence, we obtain a simple and uniform proof of Arthur's unitarity
conjecture, without case-by-case analysis or machine computation. 
\end{abstract}

\maketitle
\tableofcontents{}

\section{Introduction}

Arthur's unitarity conjecture is widely recognized as important because
it bears on the problem of determining the unitary dual of a Chevalley
group over a local field of characteristic zero. The present investigation,
however, grew out of an attempt to understand and simplify Langlands'
construction of the residual automorphic spectrum via \textquotedbl iterated
residues.\textquotedbl{}

In the author's doctoral thesis \cite{Dattu-thesis}, a simple formula
was given for the poles of the spherical Eisenstein series induced
from the trivial representation on Levi subgroups of Chevalley groups;
the proof was simple and the result structural. The proof used \emph{Langlands'
residue formula }expressing non-cuspidal Eisenstein series as a ``residue''
of a cuspidal Eisenstein series in this case. This proof extended
to the general case \emph{only if }Langlands' construction of the
residual automorphic spectrum using a ``sophisticated residue scheme''
(\cite{Arthur_LanglandsWork}, section 2), in general, was a simple
regularization of the corresponding cuspidal Eisenstein series. The
present paper establishes this for the spherical Borel Eisenstein
series induced from the trivial representation --- a first test of
that broader claim.

As an immediate corollary of the main theorem \ref{thm:main}, we
obtain a \emph{uniform conceptual proof }of Arthur's unitarity conjecture,
the last cases of which was proved in 2013 using computers by Miller
\cite{Miller_Annals}. To elaborate, Arthur\textquoteright s conjecture
is that the spherical constituent of an unramified principal series
of a Chevalley group over any local field of characteristic zero is
unitarizable if its Langlands parameter coincides with half the weighted
marking of a coadjoint nilpotent orbit of the Langlands dual Lie algebra.
The conjecture was motivated by Langlands' construction of an automorphic
form attached to the subregular orbit of $G_{2}$ (\cite{Langlands-Spectral},
appendix III). Moeglin \cite{Moeglin_Compositio} proved the conjecture
for classical groups. For the remaining exceptional groups, Miller
verified the conjecture using machine computation \cite{Miller_Annals}.
In the absense of a uniform method, the standard approach seems to
be to use similar machine computations. For recent work in this direction,
see Halawi-Segal \cite{Segal-halawi} and Hundley-Miller \cite{Hundley-Miller}. 

\emph{If} Langlands' construction of the residual automorphic spectrum
is indeed a simple regularization in general, this would be an enormous
simplification of a foundational body of work that has seemed ``impenetrable''
(\cite{Langlands-Spectral}, preface) for at least two generations.
Part of the difficulty lies in the fact that Langlands' residue calculus
involves several non-canonical choices, even in the definition of
the iterated residue itself, and it is only after the fact that these
choices are shown not to affect the outcome. Readers curious about
the subtlety involved may find it instructive to work through remark
V.1.4 of the standard reference \cite{MoeglinWaldspurger-book}. Our
construction, by contrast, involves no such choices.

A recent breakthrough is the recognition by Kazhdan and Okounkov \cite{Kazhdan-Okounkov-Paper}
that the miraculous cancellations observed in Langlands' residue calculus
(\cite{Langlands-Spectral}, appendix III) are best explained through
equivariant localization applied to the geometry of homogeneous varieties
of the dual group. This is the main tool in our proof, though our
application of the equivariant integration formula, unlike theirs,
is standard and straightforward. Familiarity with their paper is not
required to read what follows; we hope our application conveys the
strength and utility of their idea in a small way and encourages wider
engagement with their approach to spectral decomposition of automorphic
forms. 

\subsection{\label{subsec:Notation}Notation}

We use the notation introduced here throughout the paper.
\begin{itemize}
\item $G$: connected split semisimple linear algebraic group over a number
field $F$. 
\item $B=T\rtimes U$: fixed Borel subgroup $B$ of $G$, with $T$ a maximal
torus, and $U$ the unipotent radical of $B$.
\item Set
\[
\mathfrak{a}=X_{*}(T)\otimes\mathbb{R},\quad\check{\mathfrak{a}}=X^{*}(T)\otimes\mathbb{R},\quad\check{\mathfrak{a}}_{\mathbb{C}}=\check{\mathfrak{a}}\otimes\mathbb{C}
\]
where $X_{*}(T)$ and $X^{*}(T)$ denote co-character and characters
of $T$. 
\item $\Delta$: the set of simple roots of $G$ with respect to $B$. 
\item $\left\{ \varpi_{\alpha}\right\} _{\alpha\in\Delta}\subset\da$ be
the fundamental weights, dual to the simple coroots. 
\item $W$: the Weyl group of $G$. 
\item $\rho\in\da$: half-sum of positive roots. 
\item Weight cone: 
\[
\mathscr{C}_{\text{wt}}:=\left\{ \Lambda\in\da:\langle\Lambda,\alpha^{\vee}\rangle>0\ \forall\alpha\in\Delta\right\} 
\]
\[
=\left\{ \Lambda=\sum_{\alpha\in\Delta}t_{\alpha}\varpi_{\alpha}:t_{\alpha}>0\ \forall\alpha\in\Delta\right\} \subset\da
\]
\item Root cone: 
\[
\mathscr{C}_{\text{rt}}:=\left\{ \Lambda=\sum a_{\alpha}\alpha:a_{\alpha}>0\ \ \forall\alpha\in\Delta\right\} \subset\da
\]
\item $\dG$: Langlands dual group of $G$
\item $\dB\supset\dT$: Borel subgroup $\dB$ of $\dG$ with maximal torus
$\dT$, 
\[
X^{*}(\dT)=X_{*}(T)\qquad X_{*}(\dT)=X^{*}(T).
\]
\item $[G]:=G(F)\backslash G(\mathbb{A})$. Fix a nonzero invariant measure
on $[G]$ throughout.
\item $\mathcal{A}[G]$: the space of automorphic forms on $[G]$.
\item $L^{2}[G]$: the Hilbert space of square-integrable functions on $[G]$. 
\item $H=H_{B}:G(\mathbb{A})\to\mathfrak{a}$, the Harish-Chandra map. 
\end{itemize}

\subsection{Statement of the main theorem}

The \emph{unramified Borel Eisenstein series} is defined initially
as an absolutely convergent sum 
\[
E(\Lambda;g)=\sum_{\delta\in B(F)\backslash G(F)}e^{\langle\rho+\Lambda,H(\delta g)\rangle}\quad\Lambda\in\da_{\mathbb{C}},\ g\in G(\mathbb{A})
\]
when $\Re(\Lambda)\in\rho+\mathscr{C}_{\text{wt}}$. It admits a meromorphic
continuation to all of $\da_{\mathbb{C}}$ \cite{Lapid_Bernstein}.
We write 
\[
E(\Lambda):=E(\Lambda;\bullet)\in\mathcal{A}[G]
\]
to lighten the notation. 

The poles and zeros of $E(\Lambda)$ relevant for spectral decomposition
is captured by 
\[
D(\Lambda):=\prod_{\gamma>0}\frac{\langle\Lambda,\gamma^{\vee}\rangle}{\langle\Lambda,\gamma^{\vee}\rangle-1}
\]
Let
\[
E^{*}(\Lambda):=D(\Lambda)^{-1}E(\Lambda)
\]
It is known (Jacquet \cite{Jacquet_residual_GL_n}, Hegde \cite{Dattu-thesis}
p.31) that $E^{*}$ is never zero, and that it is holomorphic on the
tube domain over the closed dominant chamber $\overline{\mathscr{C}_{\text{wt}}}$. 

Let 
\[
\kappa:\slC\to\dG
\]
be a distinguished homomorphism, meaning $\kappa(SL(2,\mathbb{C}))$
is not contained in any \emph{proper} Levi subgroup of $\widehat{G}$.
Let $\mathbf{G}_{m}$ be the diagonal torus in $\slC$. We assume
that
\[
\kappa\big\vert_{\mathbf{G}_{m}}=:h_{\kappa}\in X_{*}(\dT)\simeq X^{*}(T)
\]
is dominant, that is, $h_{\kappa}\in\overline{\mathscr{C}_{\text{wt}}}$.
Let 
\[
\rho_{\kappa}:=h_{\kappa}/2
\]
When $\kappa$ is the \emph{principal} $\slC$ homomorphism, $\rho_{\kappa}=\rho$
is the half-sum of positive roots. 
\begin{thm}
\label{thm:main}The automorphic form $E^{*}(\rho_{\kappa})$ is non-zero
and square-integrable, that is, 
\[
0\neq E^{*}(\rho_{\kappa})\in L^{2}[G].
\]
\end{thm}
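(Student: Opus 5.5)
Nonvanishing is immediate from the cited fact that $E^{*}$ is never zero and is holomorphic on the tube over $\overline{\mathscr{C}_{\text{wt}}}$. Since $h_{\kappa}$ is dominant, $\rho_{\kappa}$ lies in this closed chamber, so $E^{*}(\rho_{\kappa})$ is a well-defined nonzero automorphic form. The real content is square-integrability. For that I would use Langlands' criterion: an automorphic form is in $L^{2}[G]$ if and only if, for every standard parabolic $P$, every cuspidal exponent $\lambda$ of its constant term along $P$ has $\Re\lambda$ in $-\mathscr{C}_{\text{rt}}$ (projected to $\da_{P}$). Because $E^{*}$ is induced from the trivial representation of $B$, only $P=B$ is needed. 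The constant term is
\[
E_{B}(\Lambda)=\sum_{w\in W}M(w,\Lambda)\,e^{\langle w\Lambda+\rho,H\rangle},\qquad M(w,\Lambda)=\prod_{\gamma>0,\ w\gamma<0}\frac{\xi(\langle\Lambda,\gamma^{\vee}\rangle)}{\xi(\langle\Lambda,\gamma^{\vee}\rangle+1)}.
\]
After multiplying by $D(\Lambda)^{-1}$ and taking the limit $\Lambda\to\rho_{\kappa}$, this becomes a finite sum of terms $P_{\mu}(H)\,e^{\langle\mu+\rho,H\rangle}$, where $\mu$ runs over the orbit $W\rho_{\kappa}$ and the $P_{\mu}$ are polynomials. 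The task is to show that $P_{\mu}\neq0$ forces $\mu\in-\mathscr{C}_{\text{rt}}$.

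\textbf{Step 1 (grouping).} Group the terms by $\mu$. The coefficient of $e^{\langle\mu+\rho,H\rangle}$ is the limit of
\[
\sum_{w\rho_{\kappa}=\mu}D(\Lambda)^{-1}M(w,\Lambda)e^{\langle w\Lambda-\mu,H\rangle}
\]
as $\Lambda\to\rho_{\kappa}$. Using the functional equation $\xi(s)=\xi(1-s)$ and the simple poles of $\xi$ at $0$ and $1$, I would rewrite $D^{-1}M(w,\cdot)$ as a holomorphic unit times
\[
\prod_{\gamma>0}\frac{\langle\Lambda,\gamma^{\vee}\rangle-1}{\langle\Lambda,\gamma^{\vee}\rangle}\cdot\prod_{\gamma>0,\,w\gamma<0}\frac{\langle\Lambda,\gamma^{\vee}\rangle}{\langle\Lambda,\gamma^{\vee}\rangle+1}.
\]
This expresses the $\mu$-coefficient as a sum over a coset $w_{0}W_{\rho_{\kappa}}$, where $W_{\rho_{\kappa}}$ is the stabilizer of $\rho_{\kappa}$, of rational functions with root-hyperplane denominators.

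\textbf{Step 2 (geometric interpretation).} Following Kazhdan--Okounkov, I would identify this coset sum with an equivariant localization sum. The relevant space is the Springer-type fibre, or the $\dT$-fixed locus of $\dG/\dB$, attached to the nilpotent $e=\kappa\left(\begin{smallmatrix}0&1\\0&0\end{smallmatrix}\right)$. Concretely, consider the variety $\mathcal{B}_{\mu}$ of Borels $\dB'$ such that $\mathrm{Lie}\,\dB'$ contains the part of $\dg$ of $h_{\kappa}$-weight $\geq2$, up to the twist by $\mu$. The summand for $w$ should then equal the Atiyah--Bott localization contribution at the fixed point $w$ of the equivariant integral of a class such as
\[
\mathrm{ch}(e^{H})\cdot\mathrm{Td}\quad\text{or}\quad e^{H}\times\text{Euler class of a vector bundle }\mathcal{V}_{\mu}.
\]
Here $\mathcal{V}_{\mu}$ records the weight-$2$ part of the grading, which is the tangent data of the orbit. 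The limit $\Lambda\to\rho_{\kappa}$ then becomes evaluation of this equivariant integral at a non-generic equivariant parameter, and the result is a polynomial in $H$.

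\textbf{Step 3 (vanishing by degree).} This is the main obstacle. I would show that when $\mu\notin-\mathscr{C}_{\text{rt}}$ the integrand has degree strictly less than $\dim\mathcal{B}_{\mu}$, so the integral vanishes. Here distinguishedness enters: $\dim\dg_{0}=\dim\dg_{2}$ for the grading defined by $h_{\kappa}$, and there are no odd weights. This equality is exactly what balances the number of poles ($D$ contributes denominators $\langle\Lambda,\gamma^{\vee}\rangle$) against the zeros. I expect the hard part to be matching the condition $\mu\in-\mathscr{C}_{\text{rt}}$ with nonvanishing of the top-degree term. I would first test this in the principal case $\kappa$ principal: there $\rho_{\kappa}=\rho$, $E^{*}(\rho)$ is constant, and only $\mu=-\rho$ survives. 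I would then try to prove in general that a nonzero top-degree contribution forces $\langle\varpi_{\alpha},\mu\rangle<0$ for all $\alpha$, using positivity of the weight-$2$ bundle (it is ample along the relevant directions) together with the fact that $\kappa$ lies in no proper Levi. The second fact should exclude, for each $\alpha$, the degeneration in which $\langle\varpi_{\alpha},\mu\rangle\geq0$.
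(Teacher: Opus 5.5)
Your treatment of non-vanishing, the reduction of Langlands' criterion to the Borel constant term, and the grouping of exponents by cosets $\omega W_{\kappa}$ match the paper. The gap is the heart of the theorem: that the coefficient $P_{\omega}(H)$ of $e^{\langle\rho+\omega\rho_{\kappa},H\rangle}$ vanishes whenever $\omega\rho_{\kappa}\notin-\mathscr{C}_{\text{rt}}$. Your Step~3 leaves this as a hope, and the degree mechanism you propose cannot work as stated. The specialized integrand involves $\xi$ and the exponential, so it is not homogeneous, and its equivariant integral is evaluated at $\rho_{\kappa}\neq0$; a bound on its degree forces nothing.

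Steps~1--2 also need repair. The function $c(s)=\xi(s)/\xi(s+1)$ is regular and nonzero at $s=0$, has a pole at $s=1$, and has a zero at $s=-1$. So near the relevant points it is $\frac{s+1}{s-1}$ times a unit, not $\frac{s}{s+1}$ times a unit. More seriously, that unit depends on $w$, through which $\gamma$ are inverted. So the coset sum is not a sum of rational functions, and derivatives of $\xi$ genuinely enter $P_{\omega}$. The paper absorbs this $w$-dependence into a multiplicative class. Put $\phi(s)=(s^{2}-1)\xi(s)$, $j(s)=1/(s(s+1)\xi(s+1))$ and $J(\Lambda)=\prod_{\gamma>0}j(\langle\Lambda,\gamma^{\vee}\rangle)$; then $J(\Lambda)\,\Theta_{\phi}(T\mathcal{B})|_{p_{w}}=M^{*}(w,\Lambda)$. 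The right space is not a Springer fibre. It is the component $\mathcal{C}_{\omega}$ of $\mathcal{B}^{\wA}$ (Borels containing $h$), a copy of the flag variety of $\widehat{L}_{\kappa}$ whose $\dT$-fixed points are exactly the $p_{\omega\sigma}$, $\sigma\in W_{\kappa}$. The integrand is $\Xi_{\omega,H}=e_{\dT}(T\mathcal{C}_{\omega})\,\Theta_{\phi}(T\mathcal{B}|_{\mathcal{C}_{\omega}})\,\mathrm{Exp}_{H}$, where the analytic class $\mathrm{Exp}_{H}$ exists by the GKM condition.

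The missing idea is what actually forces the vanishing. Restrict from $\dT$ to $\wA=\kappa(\mathbf{G}_{m})$, which acts trivially on $\mathcal{C}_{\omega}$, and evaluate at $z=1/2$; by naturality of the Gysin map this computes the integral at $\rho_{\kappa}$. The Chern roots of the weight-$m$ block $\mathcal{V}_{\omega}[m]$ become $x_{j}+m/2$:
\begin{itemize}
\item on the weight-$0$ block, the pole of $\phi$ at $0$ is absorbed by $e(T\mathcal{C}_{\omega})$, since $s\phi(s)$ is holomorphic;
\item on $E_{\omega}=\mathcal{V}_{\omega}[-2]$, the roots land on $x_{j}-1$, a simple zero of $\phi$.
\end{itemize}
Hence the specialized class is $e(E_{\omega})\Upsilon_{\omega,H}$, with $e(E_{\omega})$ the ordinary Euler class, and $P_{\omega}(H)e^{\langle\rho+\omega\rho_{\kappa},H\rangle}=J(\rho_{\kappa})\int_{\mathcal{C}_{\omega}}e(E_{\omega})\Upsilon_{\omega,H}$.

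Finally, $e(E_{\omega})=0$ when $\omega\rho_{\kappa}\notin-\mathscr{C}_{\text{rt}}$, because $\widehat{\mathfrak{q}}\mapsto f\bmod\widehat{\mathfrak{q}}$ is then a nowhere-vanishing section. To see this, write $\omega\rho_{\kappa}=\sum_{\alpha}a_{\alpha}\alpha$ and suppose $f\in\widehat{\mathfrak{q}}\in\mathcal{C}_{\omega}$. The $\widehat{\mathfrak{q}}$-stable line in the irreducible $\dG$-module of highest weight $N_{\alpha}\varpi_{\alpha^{\vee}}$ is killed by $f$. So its $h$-eigenvalue $2N_{\alpha}a_{\alpha}$ is $\le0$ by $\mathfrak{sl}_{2}$-theory. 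Equality would make $\kappa(\slC)$ fix that line, putting it in a proper Levi. This fixed-line argument, not $\dim\dg_{0}=\dim\dg_{2}$ or ampleness, is where distinguishedness enters. In this framework, degree alone can kill a term only when $\deg e(E_{\omega})=2\operatorname{rk}E_{\omega}$ exceeds $2\dim_{\mathbb{C}}\mathcal{C}_{\omega}$, which is the opposite of the inequality you aim for.
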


The formulation in Theorem \ref{thm:main} is due to the author. The
automorphic form $E^{*}(\rho_{\kappa})$ may be obtained in multiple
ways as an ``iterated residue'' of $E(\Lambda)$ and hence a version
of this theorem is in the union of \cite{Langlands-Spectral,Miller_Annals,Moeglin_Compositio}.
Langlands proved the result for $G_{2}$ after a long calculation
(\cite{Langlands-Spectral}, appendix III). Moeglin \cite{Moeglin_Compositio}
proved the result for classical groups. Miller \cite{Miller_Annals}
proved the theorem for the remaining exceptional groups, essentially
by using a computer to verify that the constant terms of $E^{*}(\rho_{\kappa})$
along the Borel satisfy Langlands' square-integrability criterion.
Interpreting Miller's formulation together with the author's doctoral
thesis suggested the formulation in Theorem \ref{thm:main}. 

\subsection{Arthur's unitarity conjecture}

Let $\pi_{\kappa}\subset L^{2}[G]$ be the representation generated
by $E^{*}(\rho_{\kappa})$. This representation is irreducible, and
hence factors as a restricted tensor product
\[
\pi_{\kappa}=\bigotimes_{v\le\infty}\pi_{\kappa,v}
\]
The representation $\pi_{\kappa,v}$ is the spherical constituent
of the principal series representation 
\[
\text{Ind}_{B(F_{v})}^{G(F_{v})}\left(e^{\langle\rho_{\kappa}+\rho,H_{v}(\cdot)\rangle}\right),
\]
where $H_{v}$ is the local Harish-Chandra map. The main Theorem \ref{thm:main}
shows that $\pi_{\kappa,v}$ is unitary, which is Arthur's unitarity
conjecture \cite{Arthur_conjecture} (see Miller \cite{Miller_Annals}
for more details). 

\subsection*{Acknowledgments}

I would like to thank Prof. Mohan Swaminathan for sharing his notes
on equivariant cohomology, which were of great help to me. I am also
grateful to Professors Arvind Nair, Dipendra Prasad, and Sandeep Varma for their encouragement.

\section{Langlands' square-integrability criterion applied to $E^{*}(\rho_{\kappa})$}

The constant term of an automorphic form $f\in\mathcal{A}[G]$ along
$B=U\rtimes T$ is 
\[
\left(Cf\right)(g):=\int_{U(F)\backslash U(\mathbb{A})}f(ug)du
\]
To lighten the notation, we drop the parantheses and write $Cf$. 

We have 
\[
CE(\Lambda;g)=\sum_{w\in W}M(w,\Lambda)e^{\langle\rho+w\Lambda,H(g)\rangle}
\]
where 
\[
M(w,\Lambda)=\prod_{\gamma>0:w\gamma<0}\frac{\xi(\langle\Lambda,\gamma^{\vee}\rangle)}{\xi(\langle\Lambda,\gamma^{\vee}\rangle+1)}
\]
Here, $\xi$ is the completed Dedekind zeta function of the number
field $F$. It is holomorphic on $\mathbb{C}-\{0,1\}$, with simple
poles at $0$ and $1$. The constant term 

\[
CE^{*}(\Lambda;g)=\sum_{w\in W}M^{*}(w,\Lambda)e^{\langle\rho+w\Lambda,H(g)\rangle},
\]
where 
\[
M^{*}(w,\Lambda):=D(\Lambda)^{-1}M(w,\Lambda)
\]

\subsection{Expression for $CE^{*}(\rho_{\kappa})$}

Let 
\[
\wA:=\kappa\left(\mathbf{G}_{m}\right)\subset\dT
\]
and 
\[
\widehat{L}_{\kappa}:=Z_{\dG}(\wA)
\]
be its centralizer in $\dG$. This is a Levi subgroup of $\dG$; let
$W_{\kappa}$ be its Weyl group. Then
\[
W_{\kappa}=\left\{ w\in W:w\cdot\rho_{\kappa}=\rho_{\kappa}\right\} .
\]

Since the several of the exponents appearing in $CE^{*}(\rho_{\kappa})$
collide, we group the terms into cosets of $W_{\kappa}$. 
\begin{prop}
For each $\omega\in W/W_{\kappa}$, there exists a polynomial $P_{\omega}\in\mathbb{C}[\mathfrak{a}]$
such that 
\[
CE^{*}(\rho_{\kappa};g)=\sum_{\omega\in W/W_{\kappa}}P_{\omega}(H(g))e^{\langle\rho+\omega\rho_{\kappa},H(g)\rangle}
\]
\end{prop}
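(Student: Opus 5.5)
The plan is to start from the meromorphic identity
\[
CE^{*}(\Lambda;g)=\sum_{w\in W}M^{*}(w,\Lambda)e^{\langle\rho+w\Lambda,H(g)\rangle},
\]
valid for generic $\Lambda$, and take the limit $\Lambda\to\rho_{\kappa}$. Since $E^{*}$ is holomorphic on the tube over $\overline{\mathscr{C}_{\text{wt}}}$ and $\rho_{\kappa}$ lies in the closed dominant chamber, $E^{*}(\Lambda)$ is holomorphic near $\rho_{\kappa}$. Taking the constant term is a linear operation given by integration over the compact set $U(F)\backslash U(\mathbb{A})$, so it commutes with evaluation at holomorphic points. Hence $CE^{*}(\rho_{\kappa};g)=\lim_{\Lambda\to\rho_{\kappa}}CE^{*}(\Lambda;g)$, and the left side is holomorphic in $\Lambda$ near $\rho_{\kappa}$ for each fixed $g$. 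The individual coefficients $M^{*}(w,\Lambda)$, however, may have poles at $\rho_{\kappa}$, so the limit cannot be taken term by term.

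To handle this, I group the sum by cosets $\omega=wW_{\kappa}$. For $w\in\omega$ I write $w=w_{0}u$ with $w_{0}$ a fixed representative and $u\in W_{\kappa}$, and set $\Lambda=\rho_{\kappa}+\lambda$. Then
\[
w\Lambda=w_{0}\rho_{\kappa}+w_{0}u\lambda ,
\]
so the exponential factors as
\[
e^{\langle\rho+w_{0}\rho_{\kappa},H(g)\rangle}e^{\langle w_{0}u\lambda,H(g)\rangle}.
\]
Collision among exponents happens exactly within a coset: $w\rho_{\kappa}=w'\rho_{\kappa}$ if and only if $w^{-1}w'\in W_{\kappa}$, by the stated description of $W_{\kappa}$. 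So the exponents $\rho+\omega\rho_{\kappa}$ for distinct cosets are pairwise distinct. This gives
\[
CE^{*}(\rho_{\kappa}+\lambda;g)=\sum_{\omega}e^{\langle\rho+\omega\rho_{\kappa},H(g)\rangle}F_{\omega}(\lambda;H(g)),
\qquad
F_{\omega}(\lambda;x)=\sum_{u\in W_{\kappa}}M^{*}(w_{0}u,\rho_{\kappa}+\lambda)e^{\langle w_{0}u\lambda,x\rangle}.
\]

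The key step, and the main obstacle, is to show that each $F_{\omega}(\lambda;x)$ is holomorphic at $\lambda=0$ and that its value there is a polynomial in $x$. I would argue as follows.
\begin{itemize}
\item Each $M^{*}(w,\Lambda)$ is a ratio of products of $\xi$-values and linear forms, so it has at most polar singularities along finitely many root hyperplanes through $\rho_{\kappa}$.
\item Choose a generic line $\lambda=t\nu$. Then $F_{\omega}(t\nu;x)$ is meromorphic in $t$, with a pole of some bounded order $N$ at $t=0$.
\item Its Laurent coefficients are polynomials in $x$ of degree at most $N+k$ in the coefficient of $t^{k}$, because they come from expanding the exponentials $e^{t\langle w_{0}u\nu,x\rangle}$.
\item The full sum $\sum_{\omega}e^{\langle\rho+\omega\rho_{\kappa},x\rangle}F_{\omega}(t\nu;x)$ is holomorphic at $t=0$ for every $x$, since $x=H(g)$ ranges over all of $\mathfrak{a}$ by the Iwasawa decomposition.
\end{itemize}
Now use the linear independence, over the ring of polynomials on $\mathfrak{a}$, of the functions $e^{\langle\mu,x\rangle}$ for distinct $\mu$. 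Applied to each negative-order Laurent coefficient, this forces each polynomial coefficient of $t^{-k}$, $k>0$, to vanish separately for every $\omega$. So each $F_{\omega}(t\nu;x)$ is holomorphic at $t=0$, and I set
\[
P_{\omega}(x):=F_{\omega}(0\cdot\nu;x),
\]
the constant Laurent coefficient, which is a polynomial in $x$.

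Since $CE^{*}(\rho_{\kappa};g)$ is the value at $t=0$ of a function that is holomorphic in $\Lambda$, it does not depend on the choice of $\nu$. The same independence argument, applied to the limit, then shows that each $P_{\omega}$ is independent of $\nu$. It is also independent of the choice of representative $w_{0}$, because changing $w_{0}$ only reindexes the sum over $u$. This yields the stated expansion with $P_{\omega}\in\mathbb{C}[\mathfrak{a}]$.
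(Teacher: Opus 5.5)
Your proposal is correct and follows the paper's proof. You group the Weyl sum by $W_{\kappa}$-cosets, use holomorphy of $E^{*}$ at $\rho_{\kappa}$ together with distinctness of the exponents $\rho+\omega\rho_{\kappa}$ to get a finite limit for each coset sum, and observe that the limit is a polynomial in $H$. Your Laurent expansion along a generic line $\lambda=t\nu$, combined with linear independence of exponential polynomials, supplies the justification the paper only asserts; it gives the limit along generic lines rather than the multivariable limit $\lim_{\nu\to0}$ the paper writes, but that is all the statement needs.
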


\begin{proof}
Write 
\[
\Lambda=\rho_{\kappa}+\nu
\]
Then 
\[
CE^{*}(\rho_{\kappa}+\nu;g)=\sum_{w\in W}M^{*}(w,\rho_{\kappa}+\nu)e^{\langle\rho+w(\rho_{\kappa}+\nu),H(g)\rangle}
\]
\[
=\sum_{\omega\in W/W_{\kappa}}\left(\sum_{\sigma\in W_{\kappa}}\left(M^{*}(\omega\sigma,\rho_{\kappa}+\nu)e^{\langle\omega\sigma\nu,H(g)\rangle}\right)\right)e^{\langle\rho+\omega\rho_{\kappa},H(g)\rangle}
\]
Since $\lim_{\Lambda\to\rho_{\kappa}}E^{*}(\Lambda)$ exists and $\left\{ \omega\rho_{\kappa}:\omega\in W/W_{\kappa}\right\} $
are distinct elements of $\da$, the following limit exists
\[
P_{\omega}(H(g)):=\lim_{\nu\to0}\left(\sum_{\sigma\in W_{\kappa}}M^{*}(\omega\sigma,\rho_{\kappa}+\nu)e^{\langle\omega\sigma(\nu),H(g)\rangle}\right)
\]
This limit is a finite linear combination of derivatives of $e^{\langle\cdot,H\rangle}$
at $0$, which produces a polynomial in $H$. 
\end{proof}
\begin{rem}
Explicit expressions for $P_{\omega}(H)$ are very complicated in
general. The equivariant integration formula will enable their examination
by providing an integral representation for $P_{\omega}(H)$. 
\end{rem}

The following proposition is the famous Langlands' square-integrability
criterion applied to $E^{*}(\rho_{\kappa})$.
\begin{prop}
\label{prop:Langlands-criterion}If, for every $\omega\in W/W_{\kappa}$,
\[
\omega\cdot\rho_{\kappa}\notin-\mathscr{C}_{\text{rt}}\ \implies\ P_{\omega}(H)=0,
\]
then
\[
E^{*}(\rho_{\kappa})\in L^{2}[G].
\]
\end{prop}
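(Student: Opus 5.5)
This is a direct specialization of Langlands' criterion for automorphic forms with cuspidal support on the Borel subgroup, as stated in Moeglin--Waldspurger (Lemma I.4.11). That lemma says: let $f\in\mathcal{A}[G]$ have vanishing constant terms along every standard parabolic other than $B$, and write its constant term along $B$ as
\[
Cf(g)=\sum_{\lambda}Q_{\lambda}(H(g))e^{\langle\rho+\lambda,H(g)\rangle},
\]
with distinct exponents $\lambda$ and nonzero polynomials $Q_\lambda$. Then $f\in L^{2}[G]$ if and only if every exponent satisfies $\Re\lambda\in-\mathscr{C}_{\text{rt}}$.

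The plan is to check the hypotheses of this lemma for $f=E^{*}(\rho_{\kappa})$ and then read off the conclusion.

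\textbf{Step 1: cuspidal support.} First we confirm that $E^{*}(\rho_\kappa)$ is concentrated on $B$.
\begin{itemize}
\item For $\Re\Lambda$ in the convergence region, $E(\Lambda)$ is orthogonal to cusp forms on every Levi subgroup $M\supsetneq T$. Equivalently, the cuspidal projection of its constant term along any $P\neq B$ vanishes.
\item This property persists under meromorphic continuation.
\item It also persists after multiplying by the scalar $D(\Lambda)^{-1}$ and taking the limit $\Lambda\to\rho_\kappa$. The limit exists because $E^{*}$ is holomorphic on the tube over $\overline{\mathscr{C}_{\text{wt}}}$, and $\rho_\kappa$ lies in that closed chamber.
\end{itemize}
Hence $E^{*}(\rho_\kappa)$ lies in the space $\mathcal{A}[G]_{B}$ of automorphic forms with cuspidal support on $(T,\text{trivial})$. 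That space is the setting of the lemma. In this setting, square-integrability is governed entirely by the exponents of the constant term along $B$: the constant terms along larger parabolics are determined by it, via the theory of negligible constant terms.

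\textbf{Step 2: identify the exponents.} By the previous Proposition,
\[
CE^{*}(\rho_\kappa)=\sum_{\omega}P_\omega(H)\,e^{\langle\rho+\omega\rho_\kappa,H\rangle}.
\]
The exponents $\omega\rho_\kappa$, for $\omega\in W/W_\kappa$, are pairwise distinct. They are also real, since $\rho_\kappa\in\da$. Therefore the actual set of exponents of $E^{*}(\rho_\kappa)$ is exactly
\[
\{\omega\rho_\kappa:\ P_\omega\neq0\}.
\]
The hypothesis of the present proposition says precisely that $P_\omega\neq0$ forces $\omega\rho_\kappa\in-\mathscr{C}_{\text{rt}}$. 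So every exponent lies in $-\mathscr{C}_{\text{rt}}$, and the criterion gives $E^{*}(\rho_\kappa)\in L^{2}[G]$.

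\textbf{Main obstacle.} The delicate point is justifying the criterion itself, or at least its "if" direction, in the form needed here. The constant terms are polynomial-exponential rather than pure exponentials, so one must check that the polynomial factors do not spoil integrability. I would argue this directly as follows.
\begin{itemize}
\item By reduction theory, $f$ is square-integrable if and only if $f$ restricted to a Siegel domain is square-integrable, using the measure $e^{-2\langle\rho,H\rangle}dH$ on the $T$-part.
\item On a Siegel set, $f-Cf$ is rapidly decreasing in the directions where $H$ goes to infinity in the positive chamber. More precisely, one uses the truncation and approximation-by-constant-terms estimates along each maximal parabolic, again as in Moeglin--Waldspurger I.2. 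Because of the cuspidal support, only the $B$ constant term contributes.
\item This reduces the question to showing that $\int |P_\omega(H)|^2 e^{2\langle\omega\rho_\kappa,H\rangle}dH$ is finite over the region $\{\langle\alpha,H\rangle>c\ \forall\alpha\in\Delta\}$.
\item That integral is finite exactly when $\omega\rho_\kappa\in-\mathscr{C}_{\text{rt}}$. The reason is that $-\mathscr{C}_{\text{rt}}$ is the dual cone of the positive chamber, so the exponential decays linearly in every direction of the cone, and this decay beats any polynomial factor.
\end{itemize}

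I would cite this reduction rather than reprove it.
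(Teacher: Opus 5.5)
Your proposal is correct and follows the paper's approach. The paper gives no argument beyond identifying the statement as Langlands' square-integrability criterion (Moeglin--Waldspurger, Lemma I.4.11) applied to the constant term along $B$; your check that $E^{*}(\rho_{\kappa})$ has cuspidal support on $(T,\text{trivial})$, so that only the exponents $\omega\rho_{\kappa}$ with $P_{\omega}\neq0$ matter, just makes explicit a hypothesis the paper leaves implicit. One loose point in your optional sketch: $f-Cf$ is rapidly decreasing only where every $\langle\alpha,H\rangle$ is large, so the reduction to the integral over $\{\langle\alpha,H\rangle>c\ \forall\alpha\in\Delta\}$ actually rests on the Siegel-set bound $|f(g)|\ll(1+\|H(g)\|)^{N}\sum_{\lambda}e^{\langle\rho+\Re\lambda,H(g)\rangle}$ underlying the cited lemma, which is harmless since you cite rather than reprove it.
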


\section{Geometric recasting of the square-integrability criterion}

In this section, we recast the condition 
\[
\omega\rho_{\kappa}\notin-\mathscr{C}_{\text{rt}},\qquad\omega\in W/W_{\kappa},
\]
in Proposition \ref{prop:Langlands-criterion}, geometrically, in
terms of the vanishing of the ordinary (not equivariant) Euler class
of certain a vector bundle $E_{\omega}$. 

Fix the triple $\{e,h,f\}\subset\dg$ via $d\kappa:\mathfrak{sl}(2,\mathbb{C})\to\dg$:
\[
e=d\kappa\begin{pmatrix}0 & 1\\
0 & 0
\end{pmatrix},\quad h=d\kappa\begin{pmatrix}1 & 0\\
0 & -1
\end{pmatrix},\quad f=d\kappa\begin{pmatrix}0 & 0\\
1 & 0
\end{pmatrix}
\]

\subsection{Torus action on $\widehat{G}/\widehat{B}$}

We recall some general facts. The flag variety of $\dG$ is
\[
\mathcal{B}:=\widehat{G}/\widehat{B}=\left\{ \text{Borel subalgebras of }\dg\right\} 
\]
For a torus $\widehat{S}\subset\widehat{T}$, the centralizer 
\[
L_{\widehat{S}}=Z_{\dG}\left(\widehat{S}\right)
\]
is a connected reductive group. If $W_{\dS}$ denotes its Weyl group,
then the fixed point set 
\[
\mathcal{B}^{\widehat{S}}=\bigsqcup_{u\in W/W_{\widehat{S}}}\mathcal{C}_{u},
\]
where each connected component $\mathcal{C}_{u}$ of $\mathcal{B}^{\dS}$
is isomorphic to the flag variety of $L_{\widehat{S}}$. 

In the extreme case of $\widehat{S}=\dT$, we have 
\[
L_{\widehat{T}}=\dT\quad\text{and}\quad\mathcal{B}^{\dT}=\bigsqcup_{w\in W}p_{w},\ \ p_{w}=w^{-1}\widehat{\mathfrak{b}}w
\]

\subsection{Geometric reformulation of Langlands' criterion}

Let $\wA=\kappa(\mathbf{G}_{m})$ be as before. Then 
\[
\mathcal{B}^{\wA}=\mathcal{B}^{h}=\left\{ \widehat{\mathfrak{q}}\in\mathcal{B}:h\in\widehat{\mathfrak{q}}\right\} =\bigsqcup_{\omega\in W/W_{\kappa}}\mathcal{C}_{\omega}
\]
Let 
\[
\mathcal{B}^{f}:=\left\{ \widehat{\mathfrak{q}}\in\mathcal{B}:f\in\widehat{\mathfrak{q}}\right\} 
\]

\begin{prop}
With the previous notation, 

(a) If $\kappa$ is not assumed to be distinguished, then 
\[
\mathcal{B}^{f}\cap\mathcal{C}_{\omega}\neq\emptyset\quad\implies\quad\omega\cdot\rho_{\kappa}\in-\overline{\mathscr{C}_{\text{rt}}}
\]

(b) If $\kappa$ is distinguished, then 
\[
\mathcal{B}^{f}\cap\mathcal{C}_{\omega}\neq\emptyset\quad\implies\quad\omega\cdot\rho_{\kappa}\in-\mathscr{C}_{\text{rt}}
\]
\end{prop}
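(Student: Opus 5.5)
We work with the $\operatorname{ad}h$-grading of $\dg$ and translate the condition $f\in\widehat{\mathfrak{q}}$ into a statement about highest weight vectors, which we then analyse with $\mathfrak{sl}_2$ representation theory.

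\emph{Reduction.} Let $\widehat{\mathfrak{q}}\in\mathcal{B}^{f}\cap\mathcal{C}_{\omega}$. Since $h\in\widehat{\mathfrak{q}}$, the intersection $\widehat{\mathfrak{q}}\cap\operatorname{Lie}(\widehat{L}_{\kappa})$ is a Borel subalgebra of $\operatorname{Lie}(\widehat{L}_{\kappa})$. It therefore contains a Cartan subalgebra of $\operatorname{Lie}(\widehat{L}_{\kappa})$, which is a Cartan of $\dg$. All such Cartans are $\widehat{L}_{\kappa}$-conjugate to $\widehat{\mathfrak{t}}=\operatorname{Lie}\dT$. Hence there is $l\in\widehat{L}_{\kappa}$ with
\[
\operatorname{Ad}(l)\widehat{\mathfrak{q}}=p_{w}=w^{-1}\db w
\]
for some $w\in W$. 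Because $\widehat{L}_{\kappa}$ is connected and fixes $h$, it preserves each component $\mathcal{C}_{\omega}$, so $w$ lies in the coset $\omega$. Moreover $\operatorname{Ad}(l)$ fixes $h$, so $\{\operatorname{Ad}(l)e,h,\operatorname{Ad}(l)f\}$ is again an $\mathfrak{sl}_2$-triple. Replacing $\kappa$ by $\operatorname{Ad}(l)\circ\kappa$ changes neither $h$, nor $\rho_{\kappa}$, nor the distinguished property, so we may assume $\widehat{\mathfrak{q}}=p_{w}$.

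Now apply $\operatorname{Ad}(\dot w)$ for a representative $\dot w$ of $w$. This gives an $\mathfrak{sl}_2$-triple $\{e',h',f'\}$ with $h'=w\cdot h=2\,\omega\rho_{\kappa}$ and $f'\in\db$. Since $f'$ is nilpotent and lies in $\db$, it lies in the nilradical $\widehat{\mathfrak{n}}$. (I will need to check the precise conjugation conventions matching $p_w=w^{-1}\db w$ with $\omega\rho_{\kappa}$ versus $\omega^{-1}\rho_{\kappa}$, but this is bookkeeping.)

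\emph{Part (a).} Let $\lambda$ be a dominant weight of $\dT$, i.e.\ a dominant coweight of $T$, and let $v_{\lambda}$ be a highest weight vector in the irreducible representation $V_{\lambda}$ of $\dG$. Since $\widehat{\mathfrak{n}}v_{\lambda}=0$, we get $f'v_{\lambda}=0$. Moreover $v_{\lambda}$ is an $h'$-eigenvector with eigenvalue $\langle\lambda,h'\rangle$. In a finite-dimensional $\mathfrak{sl}_2$-module, a weight vector killed by $f'$ has non-positive weight. Therefore
\[
\langle\lambda,\omega\rho_{\kappa}\rangle\le 0
\]
for every dominant $\lambda$, in particular for every fundamental coweight. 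Since the fundamental coweights are dual to the simple roots, this says exactly that $\omega\rho_{\kappa}\in-\overline{\mathscr{C}_{\text{rt}}}$.

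\emph{Part (b).} It remains to exclude $\langle\lambda,\omega\rho_{\kappa}\rangle=0$ for $\lambda$ a fundamental coweight. Suppose equality holds. Then $v_{\lambda}$ has weight $0$ and is killed by $f'$, so it spans a trivial $\mathfrak{sl}_2$-submodule. Hence the whole image $\operatorname{Ad}(\dot w)\circ d\kappa(\mathfrak{sl}_2)$ annihilates $v_{\lambda}$. The simply connected group $\slC$ then maps, under $\operatorname{Ad}(\dot w)\circ\kappa$, into the stabilizer of the line $[v_{\lambda}]$. That stabilizer is the proper maximal parabolic $\widehat{P}$ attached to $\lambda$. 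By Mostow's theorem, a reductive subgroup of $\widehat{P}$ is conjugate into a Levi factor of $\widehat{P}$. Thus $\kappa(\slC)$ lies in a proper Levi subgroup, contradicting the assumption that $\kappa$ is distinguished. Hence every inequality is strict, and $\omega\rho_{\kappa}\in-\mathscr{C}_{\text{rt}}$.

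The main obstacle is the reduction step: justifying that one may conjugate $\widehat{\mathfrak{q}}$ to a $\dT$-fixed Borel in the same component while keeping the triple an $\mathfrak{sl}_2$-triple through $h$. Keeping the Weyl group conventions consistent there is the other place needing care; the representation-theoretic core of (a) and (b) is short.
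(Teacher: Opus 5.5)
Your proposal is correct and is essentially the paper's argument: the $\mathfrak{sl}_2$-weight of the $\widehat{\mathfrak{q}}$-stable highest-weight line in a representation with highest weight a multiple of $\varpi_{\alpha^\vee}$ gives (a), and a weight-zero line killed by $f$ is fixed by all of $\kappa(\slC)$, which forces $\kappa$ into a proper parabolic and hence, by reductivity, into a proper Levi, giving (b). Your conjugation by $\widehat{L}_\kappa$ into $p_w$ with $w\in\omega W_\kappa$ only makes explicit the $h$-eigenvalue $2\langle\omega\rho_\kappa,\chi_{\alpha^\vee}\rangle$, which the paper asserts without comment. One small point: $\varpi_{\alpha^\vee}$ itself need not lie in $X^*(\dT)$, so, as the paper does, take a positive multiple $N_\alpha\varpi_{\alpha^\vee}$ to get a genuine representation of $\dG$.
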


\begin{proof}
Let $\widehat{\mathfrak{q}}\in\mathcal{B}^{f}\cap\mathcal{C}_{\omega}$,
and write
\[
\omega\cdot\rho_{\kappa}=\sum_{\alpha\in\Delta}a_{\alpha}\alpha,\quad a_{\alpha}\in\mathbb{R}\ \forall\alpha\in\Delta
\]
Let $\varpi_{\alpha^{\vee}}$ be the fundamental co-weight associated
with $\alpha\in\Delta$, and let $V_{\alpha}$ be an irreducible representation
of $\dG$ with the line $\ell_{\alpha}\subset V_{\alpha}$, stabilized
by $\dB$, on which $\dT$ acts by 
\[
\chi_{\alpha^{\vee}}=N_{\alpha}\varpi_{\alpha^{\vee}}\in\mathfrak{a}\quad\text{for some }N_{\alpha}\in\mathbb{Z}_{\ge1}.
\]
The Borel subalgebra $\widehat{\mathfrak{q}}$ stabilizes a line $\ell_{\omega,\alpha}\subset V_{\alpha}$.
Then $h\in\widehat{\mathfrak{q}}$ acts on $\ell_{\omega,\alpha}$
by 
\[
h\cdot v=2\langle\omega\cdot\rho_{\kappa},\chi_{\alpha^{\vee}}\rangle v,\quad\forall v\in\ell_{\omega,\alpha}
\]
Since $f\in\widehat{\mathfrak{q}}$ is nilpotent, we have 
\[
f\cdot v=0,\quad\forall v\in\ell_{\omega,\alpha}.
\]
By $\mathfrak{sl}(2,\mathbb{C})$-representation theory, the $h$-eigenvalue
\[
2\langle\omega\cdot\rho_{\kappa},\chi_{\alpha^{\vee}}\rangle=2N_{\alpha}a_{\alpha}\le0
\]
This proves part (a). 

Assume that $\kappa$ is distinguished. If $a_{\alpha}=0$ for some
$\alpha\in\Delta$, then 
\[
f\cdot v=h\cdot v=e\cdot v=0,\quad\forall v\in\ell_{\omega,\alpha}
\]
Thus $\kappa(\slC)$ stabilizes the line $\ell_{\omega,\alpha}$,
and hence lies in a proper parabolic subgroup of $\dG$. Since $\kappa(\slC)$
is reductive, it lies in a proper Levi subgroup of $\dG$. Since the
latter is forbidden when $\kappa$ is distinguished, we have $a_{\alpha}<0$
for all $\alpha\in\Delta$. 
\end{proof}
Let 
\[
\mathcal{V}=T\mathcal{B}\quad\text{and}\quad\mathcal{V}_{\omega}=T\mathcal{B}\big\vert_{\mathcal{C}_{\omega}}
\]
Since $\wA\simeq\mathbb{C}^{\times}$ acts trivially on each component
$\mathcal{C}_{\omega}$, we have the $\wA$-weight decomposition of
the bundle 
\[
\mathcal{V}_{\omega}=\bigoplus_{k\in\mathbb{Z}}\mathcal{V}_{\omega}[k]
\]
Let 
\[
E_{\omega}:=\mathcal{V}_{\omega}[-2]
\]
The reader may compare the following proposition with Lemma 3.5 of
\cite{Kazhdan-Okounkov-Paper}.
\begin{thm}
\label{thm:euler-vanish}Let $\kappa$ be distinguished. For $\omega\in W/W_{\kappa}$,
we have 
\[
\omega\cdot\rho_{\kappa}\notin-\mathscr{C}_{\text{rt}}\quad\implies\quad e\left(E_{\omega}\right)=0,
\]
where $e(\cdot)$ denotes the ordinary Euler class of a vector bundle. 
\end{thm}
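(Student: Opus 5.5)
Since $\kappa$ is distinguished, part (b) of the preceding proposition applies. Its contrapositive says that if $\omega\cdot\rho_{\kappa}\notin-\mathscr{C}_{\text{rt}}$, then $\mathcal{B}^{f}\cap\mathcal{C}_{\omega}=\emptyset$. The plan is to construct a section of $E_{\omega}$ whose zero locus is exactly $\mathcal{B}^{f}\cap\mathcal{C}_{\omega}$. A complex vector bundle on a compact manifold with a nowhere vanishing continuous section has vanishing (top Chern class, i.e.) Euler class, so this gives $e(E_{\omega})=0$.

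\emph{Step 1: the section.} The natural candidate is the fundamental vector field of $f$. At a point $\widehat{\mathfrak{q}}\in\mathcal{B}$ the tangent space is $T_{\widehat{\mathfrak{q}}}\mathcal{B}\simeq\dg/\widehat{\mathfrak{q}}$. Define
\[
s(\widehat{\mathfrak{q}}):=f\bmod\widehat{\mathfrak{q}}\in\dg/\widehat{\mathfrak{q}}.
\]
This is an algebraic section of $\mathcal{V}=T\mathcal{B}$, being the vector field induced by the infinitesimal action of $f$. It vanishes precisely when $f\in\widehat{\mathfrak{q}}$, so its zero set is $\mathcal{B}^{f}$.

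\emph{Step 2: it lands in the weight $-2$ piece over $\mathcal{C}_{\omega}$.} Take $\widehat{\mathfrak{q}}\in\mathcal{C}_{\omega}$. Then $\widehat{\mathfrak{q}}$ is $\wA$-stable, and the isotropy action of $\wA$ on $T_{\widehat{\mathfrak{q}}}\mathcal{B}=\dg/\widehat{\mathfrak{q}}$ is induced by the adjoint action. Hence $\mathcal{V}_{\omega}[k]$ at $\widehat{\mathfrak{q}}$ is the image of $\dg[k]$, the $\operatorname{ad}h$-eigenspace of eigenvalue $k$. Since $[h,f]=-2f$, we have $\mathrm{Ad}(\kappa(\mathrm{diag}(t,t^{-1})))f=t^{-2}f$, so $f\in\dg[-2]$. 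Therefore $s|_{\mathcal{C}_{\omega}}$ is a section of $\mathcal{V}_{\omega}[-2]=E_{\omega}$. Its zero set there is $\mathcal{B}^{f}\cap\mathcal{C}_{\omega}$, which is empty under our hypothesis.

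\emph{Step 3: conclusion.} Since $E_{\omega}$ has a nowhere vanishing section over the compact component $\mathcal{C}_{\omega}$, we have $e(E_{\omega})=0$. If $E_{\omega}$ has rank $0$, a nowhere vanishing section is impossible, so $\mathcal{B}^{f}\cap\mathcal{C}_{\omega}=\emptyset$ forces $\mathcal{C}_{\omega}$ itself to be empty, which cannot happen. So this case does not arise: whenever the hypothesis holds, the rank is positive.

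The main point requiring care is Step 2, namely matching the weight convention. One must check that the $\wA$-action on $T\mathcal{B}|_{\mathcal{C}_{\omega}}$ used to define $\mathcal{V}_{\omega}[k]$ is the adjoint one on $\dg/\widehat{\mathfrak{q}}$, so that $f$ has weight $-2$ and not $+2$. If the opposite convention were intended, the same argument would apply verbatim with $e$ in place of $f$, using the symmetric version of the preceding proposition, or with $E_{\omega}$ replaced by its dual, whose Euler class vanishes together with that of $E_{\omega}$ up to sign.
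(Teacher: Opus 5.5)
Your proof is correct and is the paper's argument: the section $\widehat{\mathfrak{q}}\mapsto f \bmod \widehat{\mathfrak{q}}$ of $E_\omega$ has zero set $\mathcal{B}^f\cap\mathcal{C}_\omega$, which is empty by part (b) of the preceding proposition, so $e(E_\omega)=0$. Your Step 2 check that $f$ lies in the weight $-2$ piece, which the paper leaves implicit, matches the paper's later use of the roots $x_j-2z$ on $E_\omega$; drop the closing hedge, since with $e$ in place of $f$ the symmetric argument gives the condition $\omega\cdot\rho_\kappa\in\mathscr{C}_{\text{rt}}$ rather than the one in the theorem.
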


\begin{proof}
Consider the section 
\[
s_{f}:\mathcal{C}_{\omega}\to E_{\omega},\quad s_{f}(\widehat{\mathfrak{q}})=f\mod\widehat{\mathfrak{q}}
\]
The vanishing set $Z(s_{f})$ of section $s_{f}$ is 
\[
Z(s_{f})=\left\{ \widehat{\mathfrak{q}}\in\mathcal{C}_{\omega}:f\in\widehat{\mathfrak{q}}\right\} =\mathcal{B}^{f}\cap\mathcal{C}_{\omega}
\]
By the previous proposition, 
\[
\omega\cdot\rho_{\kappa}\notin-\mathscr{C}_{\text{rt}}\quad\implies\quad\mathcal{B}^{f}\cap\mathcal{C}_{\omega}=\emptyset=Z(s_{f}).
\]
Thus $s_{f}$ is a nowhere vanishing section of $E_{\omega}$. By
standard results in the geometry of characteristic classes, we have
\[
e\left(E_{\omega}\right)=0.
\]
\end{proof}

\section{Equivariant cohomology and the integration formula}

A suitable application of the \emph{equivariant integration formula}
gives
\[
P_{\omega}=\int_{\mathcal{C}_{\omega}}e\left(E_{\omega}\right)(\text{other classes}),\quad\forall\omega\in W/W_{\kappa}
\]
This immediately shows that $E^{*}(\rho_{\kappa})$ satisfies Langlands'
square-integrability criterion.

In this section, we recall the \emph{equivariant localization package}
we need. Two excellent sources are \cite{Fulton-Anderson} and \cite{Atiyah-Bott-Equivariant}.
For a topological space $X$, we write $H^{*}X$ or $H^{*}(X)$for
its singular cohomology with coefficients in $\mathbb{C}$.

\subsection{Definitions}

If $\dT\simeq\left(\mathbb{C}^{\times}\right)^{r}$, let 
\[
E\dT=\left(\mathbb{C}^{\infty}-\{0\}\right)^{r},\quad B\dT=E\dT/\dT=(\mathbb{CP}^{\infty})^{r}.
\]
There is a canonical isomorphism 
\[
R:=H^{*}(B\dT)\simeq\text{Sym}^{*}\left(X^{*}(\dT)\right)=\mathbb{C}[\da_{\mathbb{C}}]
\]
The group operation in $X^{*}(\dT)$ is written additively and characters
in cohomological degree $2$. A coroot $\gamma^{\vee}$ defines a
linear polynomial 
\[
\Lambda\mapsto\langle\Lambda,\gamma^{\vee}\rangle,\quad\Lambda\in\da_{\mathbb{C}}.
\]
Products of coroots are of higher degree. 

For a left $\dT$ space $Y$, its \emph{Borel construction} is 
\[
Y_{\dT}:=\frac{E\dT\times Y}{(e\cdot t,y)\sim(e,t\cdot y)},
\]
and the $\dT$-equivariant cohomology is
\[
H_{\dT}^{*}(Y):=H^{*}(Y_{\dT}).
\]
The projection $Y_{\dT}\to B\dT$ makes $H_{\dT}^{*}(Y)$ a module
over $R$. 
\begin{example}
(Important) If $V$ is a $\dT$-representation with weights $\chi_{1},\dots,\chi_{d}$,
then we can view it as the equivariant vector bundle over a point
$\mathbf{p}$ on which $\dT$ acts trivially. Then 
\[
H_{\dT}^{*}(\mathbf{p}):=H^{*}(E\dT\times_{\dT}\mathbf{p})=H^{*}(E\dT/\dT)=H^{*}(B\dT)
\]
The Euler class $e_{\dT}(V)$ of $V$ is 
\[
e_{\dT}(V)=\prod_{j=1}^{d}\chi_{j}\in H^{2d}(B\dT)
\]
At a fixed point for the $\dT$-action, the geometric language captures
the representation-theoretic information. 
\end{example}

Fix $\mu\in\da$. Let $\mathscr{O}_{\mu}$ be the local ring of holomorphic
germs at $\mu$, and let 
\[
\mathscr{M}_{\mu}=\text{Frac}(\mathscr{O}_{\mu})
\]
be its field of meromorphic germs. We have the inclusions
\[
R\to\mathscr{O}_{\mu}\to\mathscr{M}_{\mu}.
\]
For a flag variety $Y$, write
\[
H_{\dT,\text{an}}^{*}(Y)_{\mu}:=H_{\dT}^{*}(Y)\otimes_{R}\mathscr{O}_{\mu}
\]
\[
H_{\dT,\text{mer}}^{*}(Y)_{\mu}:=H_{\dT}^{*}(Y)\otimes_{R}\mathscr{M}_{\mu}
\]

\subsection{The localization package}

Assume that $Y$ is a flag variety with $\dT$-action and that the
fixed point set $Y^{\dT}$ is finite. The following is a standard
result (\cite{Fulton-Anderson}, Theorem 5.1.13). 
\begin{thm}
\label{thm:localization-package}Let 
\[
i:Y^{\dT}\to Y,\quad\pi:Y\to\text{pt}
\]
be the inclusion and projection maps respectively. 

(a) (Injectivity at the level of holomorphic germs) The induced map
\[
i^{*}:H_{\dT}^{*}(Y)\to H_{\dT}^{*}(Y^{\dT})
\]
is injective and remains injective after extension to holomorphic
germs:
\[
i^{*}:H_{\dT,\text{an}}^{*}(Y)_{\mu}\hookrightarrow\bigoplus_{p\in Y^{\dT}}\mathscr{O}_{\mu}
\]
The image of $i^{*}$ is given by an explicit GKM condition (\cite{Fulton-Anderson},
Corollary 7.4.3 and section 15.4). 

(b) (Isomorphism) After extension to meromorphic germs, 
\[
i^{*}:H_{\dT,\text{mer}}^{*}(Y)_{\mu}\simeq\bigoplus_{p\in Y^{\dT}}\mathscr{M}_{\mu}
\]
is an isomorphism. 

(c) (Integration formula) The equivariant pushforward extends by scalars
to
\[
\pi_{*}:H_{\dT,\text{mer}}^{*}(Y)_{\mu}\to\mathscr{M}_{\mu}
\]
and satisfies 
\[
\int_{Y}^{\dT}\eta:=\pi_{*}(\eta)=\sum_{p\in Y^{\dT}}\frac{\eta\vert_{p}}{e_{\dT}(T_{p}Y)}
\]
as an equality of meromorphic germs. 
\end{thm}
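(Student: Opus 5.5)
Since this is a standard result, I would prove it by reducing to two structural facts about flag varieties with finitely many $\dT$-fixed points: \emph{equivariant formality} and the \emph{Borel localization theorem}.

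\textbf{Step 1: freeness.} The variety $Y$ has a $\dT$-stable Bruhat (Bia\l ynicki-Birula) decomposition into affine cells $Y=\bigsqcup_{p\in Y^{\dT}}Y_p$, one cell attracted to each fixed point. Ordinary cohomology is therefore concentrated in even degrees, with total rank $|Y^{\dT}|$. The Leray--Serre spectral sequence for $Y_{\dT}\to B\dT$ then degenerates, so $H^*_{\dT}(Y)$ is a free $R$-module of rank $|Y^{\dT}|$. Alternatively, I would filter by closed unions of cells and use the long exact sequences of pairs, whose odd terms vanish.

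\textbf{Step 2: torsion kernel and cokernel.} Let $c\in R$ be the product of all $\dT$-weights occurring in the tangent spaces $T_pY$, $p\in Y^{\dT}$; these weights are nonzero because the fixed points are isolated. Using the same cell filtration, I would show that $H^*_{\dT}(Y\setminus Y^{\dT})$, and each relative group, is annihilated by a power of $c$. On a cell minus its fixed point, the torus acts with stabilizers contained in the kernels of those characters, and for an orbit $\dT/K$ with $K$ proper one has $H^*_{\dT}(\dT/K)=H^*(BK)$, on which any character trivial on $K$ acts by zero. The long exact sequence of the pair $(Y,Y^{\dT})$ then shows that $\ker i^*$ and $\operatorname{coker} i^*$ are killed by $c^N$. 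This is the step I expect to be the main obstacle: the bookkeeping through the filtration needs care, though it is the classical Borel--Atiyah--Segal argument.

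\textbf{Step 3: conclusions.} For (a), $\ker i^*$ is a torsion submodule of a free module, hence zero. Injectivity survives tensoring with $\mathscr{O}_{\mu}$ because $\mathscr{O}_{\mu}$ is flat over $R$: it is flat over the algebraic local ring at $\mu$, which is a localization of $R$. For (b), $c^N$ is a nonzero polynomial and hence a unit in the field $\mathscr{M}_{\mu}$. Tensoring with $\mathscr{M}_{\mu}$ therefore kills the cokernel, and $i^*$ becomes an injective map between $\mathscr{M}_{\mu}$-vector spaces of the same dimension $|Y^{\dT}|$, hence an isomorphism. For (c), let $i_p:\{p\}\to Y$ be the inclusion of a fixed point. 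The self-intersection formula gives $i_q^*i_{p*}(1)=\delta_{pq}\,e_{\dT}(T_pY)$, with the Euler class computed as in the Example above. Hence both $\eta$ and
\[
\sum_{p}i_{p*}\bigl(\eta|_p/e_{\dT}(T_pY)\bigr)
\]
have the same restriction to every fixed point. By (b), they are equal in $H^*_{\dT,\text{mer}}(Y)_{\mu}$. Applying $\pi_*$, and using $\pi_*i_{p*}=\mathrm{id}$ on $H^*_{\dT}(\mathrm{pt})$ together with the $R$-linearity of $\pi_*$ (so that it extends by scalars to meromorphic germs), yields the integration formula.
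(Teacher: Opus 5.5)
Your argument for the injectivity in (a), for (b), and for (c) is correct. It is the standard Borel--Atiyah--Segal/Atiyah--Bott proof: equivariant formality from the cell decomposition, torsion of $H^*_{\dT}(Y,Y^{\dT})$, flatness of $\mathscr{O}_\mu$ over $R$, and the self-intersection formula. The paper gives no argument of its own and simply cites Anderson--Fulton, so there is no competing route to compare with. One small simplification of your Step 2: each cell $Y_p$ is $\dT$-equivariantly a representation $V_p$ with nonzero weights. The Thom--Gysin sequence then gives $H^*_{\dT}(V_p\setminus 0)\cong R/(e_{\dT}(V_p))$ directly, so the orbit-type Mayer--Vietoris bookkeeping is unnecessary.

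What your proposal does not address is the last sentence of (a), that the image of $i^*$ is cut out by the GKM conditions. Your Steps 1--3 show only that the image is a full-rank $R$-submodule of $\bigoplus_p R$ containing $c^N\bigoplus_p R$; they do not identify which submodule it is. Two further ingredients are needed.

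\begin{itemize}
\item The Chang--Skjelbred lemma: for equivariantly formal $Y$, the image of $H^*_{\dT}(Y)\to H^*_{\dT}(Y^{\dT})$ equals the image from the union $Y_1$ of $\dT$-orbits of dimension $\le 1$.
\item The rank-one computation on each $\dT$-stable curve $\mathbb{P}^1$ joining fixed points $p,q$ with tangent weight $\chi$. This gives the condition $f_p-f_q\in\chi R$. For flag varieties the tangent weights at each fixed point are pairwise non-proportional, so $Y_1$ is a finite union of such curves.
\end{itemize}

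Finally, write the GKM module as the kernel of $\bigoplus_p R\to\bigoplus_{\text{curves}}R/\chi R$. The flatness of $\mathscr{O}_\mu$ over $R$, which you already established, then carries the same description over to holomorphic germs. This is not cosmetic: the $\mathscr{O}_\mu$-version of the GKM criterion is exactly what the paper invokes later to construct $\mathrm{Exp}_H\in H^*_{\dT,\mathrm{an}}(\mathcal{C}_\omega)_{\rho_\kappa}$.
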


The following example illustrates how the integration formula will
be applied. 
\begin{example}
Let $V$ be a $\dT$-equivariant complex vector bundle of rank $m$
over $Y$ of dimension $n$ with finite fixed point set $Y^{\dT}$.
Let $\phi$ be a holomorphic function and let $y_{1}^{\dT},\dots,y_{m}^{\dT}$
be the equivariant Chern roots of $V$. We can define an equivariant
class 
\[
\Theta_{\phi}(V)=\prod_{j=1}^{m}\phi(y_{j}^{\dT})\ \in H_{\dT,\text{an}}^{*}(Y)_{\mu}
\]
Let the weights of $\dT$ action at the fiber $V_{p}$ over a fixed
point $p\in Y^{\dT}$ be $\beta_{p,1},\dots,\beta_{p,m}$. Let the
weights for $T_{p}Y$ be $\alpha_{p,1},\dots,\alpha_{p,n}$. Note
that for $p\in Y^{\dT}$, we have 
\[
\Theta_{\phi}(V)\big\vert_{p}=\left(\prod_{j=1}^{m}\phi(y_{j}^{\dT})\right)\big\vert_{p}=\prod_{j=1}^{m}\phi(y_{j}^{\dT}\big\vert_{p})=\prod_{j=1}^{m}\phi(\beta_{p,j})
\]
The equivariant integration formula says 
\[
\int_{Y}^{\dT}\Theta_{\phi}(V)=\sum_{p\in Y^{\dT}}\frac{\Theta_{\phi}(V)\big\vert_{p}}{e_{\dT}(T_{p}Y)}=\sum_{p\in Y^{\dT}}\frac{\prod_{j=1}^{m}\phi(\beta_{p,j})}{\prod_{i=1}^{n}\alpha_{p,i}}
\]
This is an equality of meromorphic germs at $\mu$. Even though the
individual terms $\frac{\prod_{j=1}^{m}\phi(\beta_{p,j})}{\prod_{i=1}^{n}\alpha_{p,i}}$
are only meromorphic at $\mu$, their sum is holomorphic at $\mu$.
This is analogous to the limit expression for $P_{\omega}(H)$. 

Later we use $\phi(s)=(s^{2}-1)\xi(s)$ which is holomorphic everywhere
except $0$. This requires some care. 
\end{example}

\section{Vanishing of the coefficients obstructing square-integrability}

To obtain an integral representation for $P_{\omega}(H)$, we need
a few auxiliary functions. 

\subsection{Auxiliary functions $\phi,j,J$}

The function
\[
c(s):=\frac{\xi(s)}{\xi(1+s)}
\]
appears in the constant term formula for the Eisenstein series $E(\Lambda)$.
Let
\[
\phi(s):=(s^{2}-1)\xi(s).
\]
The function $\phi$ has a simple pole at $0$ and a simple zero at
$-1$; otherwise it is holomorphic.

Define $j(s)$ to satisfy 
\[
j(s)\phi(-s)=\frac{s-1}{s}\quad\text{and}\quad j(s)\phi(s)=\frac{s-1}{s}c(s)
\]
Explicitly, after using $\xi(-s)=\xi(1+s)$, we have
\[
j(s)=\frac{1}{s(s+1)\xi(s+1)}.
\]
 The function $j$ is holomorphic and nonzero on $[0,\infty)$. 
\begin{prop}
The function
\[
J(\Lambda)=\prod_{\gamma>0}j(\langle\Lambda,\gamma^{\vee}\rangle)
\]
is holomorphic and nonzero near $\rho_{\kappa}$.
\end{prop}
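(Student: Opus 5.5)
The plan is to reduce the statement to the one-variable claim already recorded just before it, that $j$ is holomorphic and nonzero on $[0,\infty)$, and then use continuity. I would first verify that claim directly from the explicit formula
\[
j(s)=\frac{1}{s(s+1)\xi(s+1)}.
\]
The poles of $\xi$ are exactly at $0$ and $1$. So $\xi(s+1)$ has a simple pole at $s=0$, which cancels the zero of $s$, and it is holomorphic for $s\neq 0,-1$. Hence $s(s+1)\xi(s+1)$ is holomorphic on a neighborhood of $[0,\infty)$.

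It is also nonvanishing there, for three reasons:
\begin{itemize}
\item at $s=0$ it equals $\operatorname{Res}_{z=1}\xi(z)\neq0$;
\item for $s>0$ the factor $s+1$ is nonzero;
\item $\xi(s+1)$ has no zeros for $\Re(s+1)>1$, by the Euler product together with the nonvanishing of the gamma factors.
\end{itemize}
So $j$ is holomorphic and nonzero on an open neighborhood $U$ of $[0,\infty)$ in $\mathbb{C}$.

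Next I would locate the arguments $\langle\rho_{\kappa},\gamma^{\vee}\rangle$ for $\gamma>0$. By assumption $h_{\kappa}$ is dominant, so $\rho_{\kappa}=h_{\kappa}/2\in\overline{\mathscr{C}_{\text{wt}}}$. Every positive coroot $\gamma^{\vee}$ is a nonnegative integer combination of simple coroots, so $\langle\rho_{\kappa},\gamma^{\vee}\rangle\ge0$ for all $\gamma>0$. These values are in fact half-integers, since $h_{\kappa}$ is a character, but only their nonnegativity is needed. Thus each $\langle\rho_{\kappa},\gamma^{\vee}\rangle$ lies in $[0,\infty)\subset U$.

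Finally, each map $\Lambda\mapsto\langle\Lambda,\gamma^{\vee}\rangle$ is linear and hence continuous on $\da_{\mathbb{C}}$. There are finitely many positive roots, so there is a neighborhood $N$ of $\rho_{\kappa}$ in $\da_{\mathbb{C}}$ with $\langle\Lambda,\gamma^{\vee}\rangle\in U$ for all $\Lambda\in N$ and all $\gamma>0$. On $N$ each factor $j(\langle\Lambda,\gamma^{\vee}\rangle)$ is holomorphic, being a composition of holomorphic maps, and nonzero. A finite product of such functions is again holomorphic and nonzero, so $J$ is holomorphic and nonvanishing near $\rho_{\kappa}$.

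The only substantive input is the nonvanishing of $\xi$ on the closed half-line $s\ge1$. At $s=1$ this is the nonzero residue, and for $s>1$ it is the absolute convergence of the Euler product. Everything else is bookkeeping about dominance.
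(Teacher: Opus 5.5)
Your proof is correct and takes the same route as the paper. The paper's proof is the single line that dominance of $\rho_\kappa$ puts every $\langle\rho_\kappa,\gamma^\vee\rangle$ in $[0,\infty)$, where $j$ is holomorphic and nonzero. You additionally check that one-variable claim from the explicit formula for $j$, using the cancellation of the pole of $\xi$ at $1$ and the nonvanishing of $\xi$ on $[1,\infty)$, and you spell out the open-neighborhood and continuity bookkeeping that the paper leaves implicit.
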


\begin{proof}
Follows since $\rho_{\kappa}$ is dominant. 
\end{proof}

\subsection{Construction of an equivariant class}

For a $\dT$-equivariant vector bundle $V$ of rank $d$ with equivariant
Chern roots $y_{1}^{\dT},\dots,y_{d}^{\dT}$, define the meromorphic
class 
\[
\Theta_{\phi}(V)=\prod_{j=1}^{d}\phi(y_{j}^{\dT})
\]
when it is well-defined. With the convention that 
\[
p_{w}:=w^{-1}\db w\in\mathcal{B}\quad(w\in W),
\]
the tangent weights of $\mathcal{B}$ at $p_{w}$ are 
\[
\text{Wt}_{\dT}(T_{p_{w}}\mathcal{B})=\left\{ \gamma^{\vee}:\gamma>0,w\gamma<0\right\} \bigsqcup\left\{ -\gamma^{\vee}:\gamma>0,w\gamma>0\right\} 
\]
The above choice of $\phi,j,J$ was made so that 
\begin{prop}
We have
\[
J(\Lambda)\Theta_{\phi}(T\mathcal{B})\big\vert_{p_{w}}(\Lambda)=M^{*}(w,\Lambda)
\]
\end{prop}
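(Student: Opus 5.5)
The identity is a direct computation at each fixed point $p_w$: we split the positive roots according to the sign of $w\gamma$ and check the identity factor by factor. The key point is that for $\Lambda\in\da_{\mathbb{C}}$, a tangent weight $\pm\gamma^{\vee}$ restricted to $p_w$ and evaluated at $\Lambda$ gives $\pm\langle\Lambda,\gamma^{\vee}\rangle$.

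First, the equivariant Chern roots of $T\mathcal{B}$ restricted to $p_w$ are the tangent weights listed above. So
\[
\Theta_{\phi}(T\mathcal{B})\big\vert_{p_{w}}(\Lambda)=\prod_{\gamma>0,\,w\gamma<0}\phi(\langle\Lambda,\gamma^{\vee}\rangle)\prod_{\gamma>0,\,w\gamma>0}\phi(-\langle\Lambda,\gamma^{\vee}\rangle).
\]
This is an identity of meromorphic functions, which is legitimate because $\phi$ is meromorphic. Multiplying by $J(\Lambda)=\prod_{\gamma>0}j(\langle\Lambda,\gamma^{\vee}\rangle)$ and pairing each factor of $J$ with the corresponding $\gamma$, the defining relations of $j$ give, with $s=\langle\Lambda,\gamma^{\vee}\rangle$:
\begin{itemize}
\item for $w\gamma<0$, the factor is $j(s)\phi(s)=\frac{s-1}{s}c(s)$;
\item for $w\gamma>0$, the factor is $j(s)\phi(-s)=\frac{s-1}{s}$.
\end{itemize}
Taking the product over all $\gamma>0$ yields
\[
\prod_{\gamma>0}\frac{\langle\Lambda,\gamma^{\vee}\rangle-1}{\langle\Lambda,\gamma^{\vee}\rangle}\prod_{\gamma>0,\,w\gamma<0}c(\langle\Lambda,\gamma^{\vee}\rangle)=D(\Lambda)^{-1}M(w,\Lambda)=M^{*}(w,\Lambda).
\]
Here we used that $c(s)=\xi(s)/\xi(s+1)$ is exactly the factor appearing in $M(w,\Lambda)$.

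The only step needing care is the consistency of the two defining relations for $j$. Dividing them gives $\phi(s)/\phi(-s)=c(s)$. This holds because
\[
\phi(-s)=(s^{2}-1)\xi(-s)=(s^{2}-1)\xi(1+s)
\]
by the functional equation $\xi(-s)=\xi(1+s)$, so $\phi(s)/\phi(-s)=\xi(s)/\xi(1+s)=c(s)$. The explicit formula $j(s)=1/(s(s+1)\xi(s+1))$ then satisfies both relations.

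A secondary point is the sign convention for the weights at $p_w=w^{-1}\db w$. This has already been fixed in the displayed list of tangent weights, which we take as given, so that the $\phi(+s)$ factors occur exactly for $w\gamma<0$. The identity is meant as an equality of meromorphic functions of $\Lambda$; no holomorphy issues arise at this stage.
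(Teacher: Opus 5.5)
Your proposal is correct and follows essentially the same route as the paper. You evaluate $\Theta_{\phi}(T\mathcal{B})$ at $p_w$ through the listed tangent weights, pair each factor of $J$ with its positive root, and apply the two defining relations of $j$ to obtain $D(\Lambda)^{-1}M(w,\Lambda)$. Your extra check that the two relations for $j$ are compatible via $\xi(-s)=\xi(1+s)$ is a correct detail that the paper leaves implicit in its explicit formula for $j$.
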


\begin{proof}
This is a simple computation: 
\[
J(\Lambda)\Theta_{\phi}(T\mathcal{B})\big\vert_{p_{w}}(\Lambda)=
\]
\[
\prod_{\gamma>0}j(\langle\Lambda,\gamma^{\vee}\rangle)\cdot\prod_{\gamma>0:w\gamma<0}\phi(\langle\Lambda,\gamma^{\vee}\rangle)\prod_{\gamma>0:w\gamma>0}\phi(-\langle\Lambda,\gamma^{\vee}\rangle)
\]
\[
=D(\Lambda)^{-1}\prod_{\gamma>0:w\gamma<0}c(\langle\Lambda,\gamma^{\vee}\rangle)=M^{*}(w,\Lambda).
\]
\end{proof}

\subsection{The exponential class}

Fix $H\in\mathfrak{a}$. At a fixed point $p_{w}$ of $\mathcal{B}$,
prescribe the holomorphic germ 
\[
\varepsilon_{w,H}(\Lambda)=e^{\langle\rho+w\Lambda,H\rangle}.
\]

\begin{prop}
There is a global analytic class 
\[
\text{Exp}_{H}\in H_{\dT,\text{an}}^{*}\left(\mathcal{C}_{\omega}\right)_{\rho_{\kappa}}
\]
satisfying 
\[
\text{Exp}_{H}\big\vert_{p_{w}}(\Lambda)=\varepsilon_{w,H}(\Lambda)
\]
\end{prop}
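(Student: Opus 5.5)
The plan is to construct $\text{Exp}_{H}$ first on all of $\mathcal{B}$ and then restrict it to $\mathcal{C}_{\omega}$. Restriction $H_{\dT}^{*}(\mathcal{B})\to H_{\dT}^{*}(\mathcal{C}_{\omega})$ is compatible with restriction to fixed points, because $\mathcal{C}_{\omega}^{\dT}\subset\mathcal{B}^{\dT}$ consists of the points $p_{w}$ with $w$ in the corresponding coset. So it suffices to produce a class in $H_{\dT,\text{an}}^{*}(\mathcal{B})_{\rho_{\kappa}}$ whose restriction to $p_{w}$ is $\varepsilon_{w,H}$ for every $w\in W$.

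\emph{Heuristic and degree-two part.} Heuristically, $\text{Exp}_{H}=e^{\langle\rho,H\rangle}\exp\bigl(c_{1}^{\dT}(\mathcal{L}_{H})\bigr)$, where $\mathcal{L}_{H}$ is the equivariant line bundle on $\mathcal{B}$ attached to $H$. Recall that $H\in\mathfrak{a}=X^{*}(\dT)\otimes\mathbb{R}$. Write $H=\sum_{i}H_{i}\chi_{i}$ with $\chi_{i}\in X^{*}(\dT)$, and set $c(H)=\sum_{i}H_{i}c_{1}^{\dT}(\mathcal{L}_{\chi_{i}})$, which is $\mathbb{C}$-linear in $H$. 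Fix the sign convention (either $\mathcal{L}_{\chi}$ or its dual) so that $c(H)\vert_{p_{w}}$ is the linear function $\Lambda\mapsto\langle\Lambda,w^{-1}H\rangle=\langle w\Lambda,H\rangle$. This matches the stated tangent weights at $p_{w}=w^{-1}\db w$.

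\emph{Main obstacle.} The difficulty is that $\exp(c(H))$ is an infinite series in $H_{\dT}^{*}(\mathcal{B})$, so it is not literally a class. I will therefore not exponentiate directly. Instead I will use the injectivity and the GKM description of the image from Theorem \ref{thm:localization-package}(a), extended to holomorphic germs.

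\emph{Step 1: GKM with holomorphic coefficients.} For the flag variety, a tuple $(f_{w})_{w\in W}\in\bigoplus_{w}R$ lies in the image of $i^{*}$ if and only if $f_{w}-f_{w'}$ is divisible by the weight $\gamma^{\vee}$ of the $\dT$-stable curve joining $p_{w}$ and $p_{w'}$. Such curves join $p_{w}$ and $p_{s_{\gamma}w}$ (up to the convention). Since $H_{\dT}^{*}(\mathcal{B})$ is a free $R$-module and $\mathscr{O}_{\mu}$ is flat over $R$, the description of $i^{*}$ as the kernel of a map of free modules survives base change to $\mathscr{O}_{\mu}$. Flatness holds because $\mathscr{O}_{\mu}$ is flat over the local ring of $R$ at $\mu$, which in turn is flat over $R$. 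Hence the image of $H_{\dT,\text{an}}^{*}(\mathcal{B})_{\rho_{\kappa}}$ in $\bigoplus_{w}\mathscr{O}_{\rho_{\kappa}}$ is cut out by the same divisibility conditions, now read in $\mathscr{O}_{\rho_{\kappa}}$.

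\emph{Step 2: verify the divisibility.} For adjacent $w$ and $w'=s_{\gamma}w$, we have $w'\Lambda=s_{\gamma}(w\Lambda)$, so
\[
\langle w\Lambda,H\rangle-\langle w'\Lambda,H\rangle=\langle w\Lambda,\gamma^{\vee}\rangle\langle\gamma,H\rangle,
\]
which is, up to a constant, the linear form given by the curve weight. The identity $e^{a}-e^{b}=(a-b)\int_{0}^{1}e^{b+t(a-b)}dt$ shows that $\varepsilon_{w,H}-\varepsilon_{w',H}$ equals this linear form times an entire function. So the divisibility holds in $\mathscr{O}_{\rho_{\kappa}}$, and in fact in the ring of entire functions.

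\emph{Conclusion.} By Step 1, the tuple $(\varepsilon_{w,H})_{w}$ is $i^{*}$ of a unique class in $H_{\dT,\text{an}}^{*}(\mathcal{B})_{\rho_{\kappa}}$; uniqueness comes from injectivity. Its restriction to $\mathcal{C}_{\omega}$ is the desired $\text{Exp}_{H}$. As a consistency check, this class agrees with the truncated series $e^{\langle\rho,H\rangle}\sum_{n\le N}c(H)^{n}/n!$ modulo high degree, which justifies the notation.

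The step I expect to need most care is Step 1: that the GKM image description passes to holomorphic germs. If the flatness argument is unconvenient, I would instead write $\varepsilon_{w,H}$ as a convergent power series in $\Lambda-\rho_{\kappa}$ and apply the polynomial GKM statement degree by degree. That works because $i^{*}$ is injective and graded with finite-rank graded pieces, so the resulting classes assemble into an element of the completion, which contains $H_{\dT,\text{an}}^{*}(\mathcal{B})_{\rho_{\kappa}}$ compatibly.
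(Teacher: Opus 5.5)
Your proposal is correct and is essentially the paper's argument. Both verify the GKM divisibility $\varepsilon_{w,H}-\varepsilon_{w',H}\in(\text{edge weight})\cdot\mathscr{O}_{\rho_\kappa}$ using the reflection identity and $e^{a}-e^{b}\in(a-b)\cdot(\text{entire})$, and your edges $(w,s_\gamma w)$ with weight $\langle w\Lambda,\gamma^\vee\rangle$ are just a reindexing of the paper's edges $(w,ws_\gamma)$ with weight $\langle\Lambda,\gamma^\vee\rangle$, since $s_\gamma w=ws_{w^{-1}\gamma}$.

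Your flat-base-change step (the GKM conditions, as kernels and preimages, commute with the flat extension $R\to\mathscr{O}_{\mu}$) fills in a point the paper leaves implicit. It also makes your degree-by-degree fallback unnecessary, which is just as well: as sketched, that fallback would need more work, because expanding about $\rho_\kappa$ does not respect the grading of $R$ and a priori only lands in a formal completion.
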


\begin{proof}
This is a simple application of the GKM divisibility condition applied
to our situation (\cite{Fulton-Anderson}, Corollary 7.4.3 and Proposition
15.4.2). For a positive root $\gamma>0$, let $s_{\gamma}\in W$ be
the reflection defined by $\gamma$. In the present situation, the
GKM divisibility condition is 
\[
\exp\langle\rho+w\Lambda,H\rangle-\exp\langle\rho+ws_{\gamma}\Lambda,H\rangle\in\langle\Lambda,\gamma^{\vee}\rangle\cdot\mathscr{O}_{\rho_{\kappa}}
\]
for every positive root $\gamma>0$ and every $w\in W$. This easily
follows from 
\[
\Lambda-s_{\gamma}\Lambda=\langle\Lambda,\gamma^{\vee}\rangle\gamma,
\]
which implies
\[
\langle w\Lambda,H\rangle-\langle ws_{\gamma}\Lambda,H\rangle=\langle\Lambda,\gamma^{\vee}\rangle\langle w\gamma,H\rangle
\]
\end{proof}
Previous computations immediately yield
\begin{cor}
For every $w\in W$ and $H\in\mathfrak{a}$, we have
\[
J(\Lambda)\left(\text{Exp}_{H}\cdot\Theta_{\phi}(T\mathcal{B})\right)\big\vert_{p_{w}}(\Lambda)=M^{*}(w,\Lambda)e^{\langle\rho+w\Lambda,H\rangle}
\]
\end{cor}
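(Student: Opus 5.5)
The plan is to restrict attention to the fixed points $p_{w}$ and compare the two sides pointwise; multiplicativity of restriction to a fixed point is the only structural fact needed. Restriction $i_{p_{w}}^{*}$ to a $\dT$-fixed point is a ring homomorphism $H_{\dT,\text{mer}}^{*}(\cdot)_{\rho_{\kappa}}\to\mathscr{M}_{\rho_{\kappa}}$, and it remains one after extending scalars to meromorphic germs. Hence
\[
\left(\text{Exp}_{H}\cdot\Theta_{\phi}(T\mathcal{B})\right)\big\vert_{p_{w}}=\text{Exp}_{H}\big\vert_{p_{w}}\cdot\Theta_{\phi}(T\mathcal{B})\big\vert_{p_{w}}.
\]
Multiplying by the scalar germ $J(\Lambda)$ commutes with restriction because it comes from $\mathscr{O}_{\rho_{\kappa}}$, which is the base ring.

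The next step is to substitute the two preceding propositions into this factorization. By the proposition constructing $\text{Exp}_{H}$, the first factor equals $\varepsilon_{w,H}(\Lambda)=e^{\langle\rho+w\Lambda,H\rangle}$. By the proposition computing $J\Theta_{\phi}(T\mathcal{B})$ at $p_{w}$, the product of $J(\Lambda)$ with the second factor equals $M^{*}(w,\Lambda)$. Multiplying these gives the claimed identity $M^{*}(w,\Lambda)e^{\langle\rho+w\Lambda,H\rangle}$, as an equality of meromorphic germs at $\rho_{\kappa}$.

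The only point that needs care is bookkeeping about where each class lives. $\text{Exp}_{H}$ was constructed on the components $\mathcal{C}_{\omega}$ of $\mathcal{B}^{\wA}$, and $p_{w}\in\mathcal{C}_{\omega}$ exactly when $w\in\omega$. Since these components are disjoint, the classes on them glue to a class on $\mathcal{B}^{\wA}$, so the product with $\Theta_{\phi}(T\mathcal{B})\vert_{\mathcal{C}_{\omega}}$ makes sense on each component. By the same GKM argument, with the divisibility checked for every positive root $\gamma$, $\text{Exp}_{H}$ in fact also exists on all of $\mathcal{B}$.

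$\Theta_{\phi}$ is only meromorphic, because $\phi$ has a pole at $0$. I would handle this by working throughout in $H_{\dT,\text{mer}}^{*}$, where Theorem \ref{thm:localization-package}(b) makes restriction to fixed points an isomorphism onto $\bigoplus_{p}\mathscr{M}_{\rho_{\kappa}}$. In that ring, products are computed componentwise, so the identity reduces to the pointwise computation above and no further obstacle arises.
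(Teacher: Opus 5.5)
Your proposal is correct and is essentially the paper's argument. The paper's one-line justification (``previous computations immediately yield'') is exactly that restriction to $p_{w}$ is a ring homomorphism, still so after extending scalars to meromorphic germs, so the identity follows by multiplying $\text{Exp}_{H}\big\vert_{p_{w}}=\varepsilon_{w,H}$ with $J(\Lambda)\Theta_{\phi}(T\mathcal{B})\big\vert_{p_{w}}=M^{*}(w,\Lambda)$, and your remark that the GKM check (done for all $w\in W$ and all $\gamma>0$) gives $\text{Exp}_{H}$ on all of $\mathcal{B}$, not just on $\mathcal{C}_{\omega}$, correctly fixes a small notational inconsistency in the paper's statement of that proposition.
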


\subsection{Equivariant integral representation for $P_{\omega}(\nu,H)$}

For $\omega\in W/W_{\kappa}$ and $H\in\mathfrak{a}$, let 
\[
\mathcal{V}_{\omega}=T\mathcal{B}\big\vert_{\mathcal{C}_{\omega}},\quad\mathcal{E}_{\omega,H}=\text{Exp}_{H}\big\vert_{\mathcal{C}_{\omega}}
\]
Define 
\[
\Xi_{\omega,H}=e_{\dT}\left(T\mathcal{C}_{\omega}\right)\Theta_{\phi}(\mathcal{V}_{\omega})\mathcal{E}_{\omega,H}
\]
Recall that 
\[
P_{\omega}(\nu,H):=\left(\sum_{\sigma\in W_{\kappa}}\left(M^{*}(\omega\sigma,\rho_{\kappa}+\nu)e^{\langle\omega\sigma\nu,H\rangle}\right)\right)
\]
and
\[
\lim_{\nu\to0}P_{\omega}(\nu,H)=:P_{\omega}(H)
\]

\begin{prop}
\label{prop:equivariant-integral-rep}As meromorphic germs at $\rho_{\kappa}$,
we have the equivariant integral representation
\[
J(\Lambda)\left(\int_{\mathcal{C}_{\omega}}^{\dT}\Xi_{\omega,H}\right)(\Lambda)=P_{\omega}(\nu,H)e^{\langle\rho+\omega\rho_{\kappa},H\rangle},\quad\Lambda=\rho_{\kappa}+\nu
\]
\end{prop}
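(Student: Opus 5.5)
Apply the equivariant integration formula of Theorem \ref{thm:localization-package}(c) to the class $\Xi_{\omega,H}$ on $Y=\mathcal{C}_{\omega}$, then identify the resulting localization sum with $P_{\omega}(\nu,H)$. The ingredients are: the fixed points of $\mathcal{C}_{\omega}$, the tangent weights there, and the Corollary computing $J\cdot\text{Exp}_{H}\cdot\Theta_{\phi}(T\mathcal{B})$ at $p_w$.

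First I record the fixed points. Since $\mathcal{C}_{\omega}$ is a $\dT$-stable component of $\mathcal{B}^{\wA}$ isomorphic to the flag variety of $\widehat{L}_{\kappa}$, its $\dT$-fixed points are the $p_w$ lying in it. With the convention $p_w=w^{-1}\db w$, these are exactly the $p_{\omega\sigma}$ for $\sigma\in W_{\kappa}$, for a suitable representative $\omega$ of the coset. I would check this by noting that $h\in p_w$ if and only if $w$ sends $h$ to a dominant-type position. Consequently $w\rho_{\kappa}$ is constant on the fixed points of a component, and the components are indexed by $W/W_{\kappa}$ as stated. One may need to replace $\omega$ by $\omega^{-1}$ or pass to a right coset to match the paper's conventions; I will fix the convention so that $w\rho_{\kappa}=\omega\rho_{\kappa}$ for every fixed point $p_w\in\mathcal{C}_{\omega}$.

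Next, at each fixed point $p=p_{\omega\sigma}$ the localization formula gives the term
\[
\frac{\Xi_{\omega,H}\vert_{p}}{e_{\dT}(T_{p}\mathcal{C}_{\omega})}=\frac{e_{\dT}(T_{p}\mathcal{C}_{\omega})\,\Theta_{\phi}(T_p\mathcal{B})\,\varepsilon_{\omega\sigma,H}}{e_{\dT}(T_{p}\mathcal{C}_{\omega})}=\Theta_{\phi}(T\mathcal{B})\vert_{p}\,\varepsilon_{\omega\sigma,H}.
\]
The factor $e_{\dT}(T\mathcal{C}_{\omega})$ was inserted precisely to produce this cancellation. This cancellation is legitimate in $\mathscr{M}_{\rho_{\kappa}}$ because the tangent weights of $\mathcal{C}_{\omega}$ are the coroots of $\widehat{L}_{\kappa}$. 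These are nonzero linear forms, so their product is a nonzero germ.

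Multiplying by $J(\Lambda)$ and using the Corollary, each term becomes $M^{*}(\omega\sigma,\Lambda)e^{\langle\rho+\omega\sigma\Lambda,H\rangle}$. Substituting $\Lambda=\rho_{\kappa}+\nu$ and using $\omega\sigma\rho_{\kappa}=\omega\rho_{\kappa}$, this factors as $M^{*}(\omega\sigma,\rho_{\kappa}+\nu)e^{\langle\omega\sigma\nu,H\rangle}e^{\langle\rho+\omega\rho_{\kappa},H\rangle}$. Summing over $\sigma\in W_{\kappa}$ gives exactly $P_{\omega}(\nu,H)e^{\langle\rho+\omega\rho_{\kappa},H\rangle}$.

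The main obstacle is justifying that $\Xi_{\omega,H}$ is a legitimate element of $H^{*}_{\dT,\text{mer}}(\mathcal{C}_{\omega})_{\rho_{\kappa}}$, so that Theorem \ref{thm:localization-package}(c) applies. The difficulty is that $\phi$ has a pole at $0$, so $\Theta_{\phi}(\mathcal{V}_{\omega})$ is not a priori an analytic class.

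My plan for this step is to split $\mathcal{V}_{\omega}$ by $\wA$-weights. On the summands of nonzero $\wA$-weight $k$, the equivariant Chern roots restrict at $\Lambda=\rho_{\kappa}$ to the nonzero values $k/2$. There $\phi$ is holomorphic near those values, so $\Theta_{\phi}$ of these summands is defined by the power series of $\phi$ around $k/2$ applied to the Chern roots, giving an analytic class. The weight-zero part is $T\mathcal{C}_{\omega}$, whose roots vanish at $\rho_{\kappa}$. There I will use $s\phi(s)$, which is holomorphic, and write $\Theta_{\phi}(T\mathcal{C}_{\omega})=\Theta_{s\phi}(T\mathcal{C}_{\omega})/e_{\dT}(T\mathcal{C}_{\omega})$. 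Then $e_{\dT}(T\mathcal{C}_{\omega})\Theta_{\phi}(\mathcal{V}_{\omega})$ is an honest analytic class, and the restrictions computed above remain valid.

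If this splitting is awkward, the fallback is Theorem \ref{thm:localization-package}(b). That result lets me define $\Xi_{\omega,H}$ directly in the meromorphic localized ring by its fixed-point values, and the integration formula then holds by definition of $\pi_{*}$ extended by scalars.
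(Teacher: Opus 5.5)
Your proposal is correct and follows the paper's proof. You localize on $\mathcal{C}_{\omega}$ at the fixed points $p_{\omega\sigma}$ ($\sigma\in W_{\kappa}$), cancel $e_{\dT}(T\mathcal{C}_{\omega})$ against the localization denominator, and apply the Corollary; your well-definedness step (using $s\phi(s)$ on the weight-zero block) is the same device the paper uses later in Proposition \ref{prop:specialization}. One small slip in a side remark: $h\in\widehat{\mathfrak{t}}\subset p_w$ for every $w$, so every $\dT$-fixed point lies in $\mathcal{B}^{h}$. What determines the component containing $p_w$ is the coset $wW_{\kappa}$, since $\widehat{L}_{\kappa}\cdot p_w$ has fixed points $p_{w\sigma}$, not whether $h$ lies in $p_w$.
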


\begin{proof}
We apply the equivariant integration formula where the factor from
$e\left(T\mathcal{C}_{\omega}\right)$ in $\Xi_{\omega,H}$ cancels
the localization denominator: 
\[
J(\Lambda)\left(\int_{\mathcal{C}_{\omega}}^{\dT}\Xi_{\omega,H}\right)(\Lambda)=\left(\sum_{\sigma\in W_{\kappa}}J(\Lambda)\left(\text{Exp}_{H}\cdot\Theta_{\phi}(T\mathcal{B})\right)\big\vert_{p_{\omega\sigma}}(\Lambda)\right)
\]
\[
=\sum_{\sigma\in W_{\kappa}}M^{*}(\omega\sigma,\Lambda)e^{\langle\rho+\omega\sigma\Lambda,H\rangle}.
\]
\end{proof}

\subsection{Specialization-Evaluation at $\rho_{\kappa}$}

As before, $\wA:=\kappa(\mathbf{G}_{m})$. Since $\wA$ acts trivially
on $\mathcal{C}_{\omega}$, we have 
\[
H_{\wA}^{*}(\mathcal{C}_{\omega}):=H^{*}(\mathcal{C}_{\omega}\times_{\wA}E\wA)=H^{*}(\mathcal{C}_{\omega}\times B\wA)=H^{*}(\mathcal{C}_{\omega})\otimes_{\mathbb{C}}\mathbb{C}[z]
\]
Here $z$ denotes coordinates on $X_{*}(\wA)\otimes_{\mathbb{Z}}\mathbb{C}$
for which $z=1$ corresponds to $h_{\kappa}$ and $z=\frac{1}{2}$
corresponds to $\rho_{\kappa}=h_{\kappa}/2$. 

Let $\mathscr{O}_{\frac{1}{2}}$ be the ring of holomorphic germs
at $z=1/2$. By restriction (\cite{Fulton-Anderson}, p. 36), we obtain
\[
\text{Res}_{\wA}:H_{\dT,\text{an}}^{*}(\mathcal{C}_{\omega})_{\rho_{\kappa}}\to H^{*}(\mathcal{C}_{\omega})\otimes\mathscr{O}_{\frac{1}{2}}
\]
and the evaluation map 
\[
\text{eval}_{1/2}:H^{*}(\mathcal{C}_{\omega})\otimes\mathscr{O}_{\frac{1}{2}}\to H^{*}(\mathcal{C}_{\omega}),\quad\theta\otimes f\mapsto f(1/2)\theta
\]
Let 
\[
\text{sp}_{\kappa}=\text{eval}_{1/2}\circ\text{Res}_{\wA}
\]
Naturality of equivariant Gysin maps under change of groups (\cite{Fulton-Anderson},
p. 36) gives
\[
\left(\int_{\mathcal{C}_{\omega}}^{\dT}\eta\right)(\rho_{\kappa})=\int_{\mathcal{C}_{\omega}}\text{sp}_{\kappa}(\eta),\quad\eta\in H_{\dT,\text{an}}^{*}(\mathcal{C}_{\omega})_{\rho_{\kappa}}
\]
We note that the right hand side is an ordinary integral of an ordinary
cohomology class $\text{sp}_{\kappa}(\eta)\in H^{*}(\mathcal{C}_{\omega})$
over $\mathcal{C}_{\omega}$. 

\subsection{Conclusion of the proof}

The following is the special feature of a torus acting trivially on
the base: every equivariant Chern root is an ordinary Chern root plus
a scalar weight shift. If
\[
x_{1},\dots,x_{r}
\]
are the ordinary Chern roots of $\mathcal{V}_{\omega}[m]$, since
$\wA$ acts trivially on $\mathcal{C}_{\omega}$, the equivariant
Chern roots of $\mathcal{V}_{\omega}[m]$ are 
\[
x_{1}+mz,\dots,x_{r}+mz
\]
and after specialization at $\rho_{\kappa}$, they become 
\[
x_{1}+\frac{m}{2},\dots,x_{r}+\frac{m}{2},\quad z=\frac{1}{2}.
\]

\begin{prop}
\label{prop:specialization}There is a class $\Upsilon_{\omega,H}\in H^{*}(\mathcal{C}_{\omega})$
such that 
\[
\text{sp}_{\kappa}\left(\Xi_{\omega,H}\right)=e(E_{\omega})\Upsilon_{\omega,H}
\]
\end{prop}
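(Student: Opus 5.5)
We decompose $\Xi_{\omega,H}=e_{\dT}(T\mathcal{C}_{\omega})\,\Theta_{\phi}(\mathcal{V}_{\omega})\,\mathcal{E}_{\omega,H}$ along the $\wA$-weight decomposition $\mathcal{V}_{\omega}=\bigoplus_{m}\mathcal{V}_{\omega}[m]$ and specialize each piece separately. Since $\text{sp}_{\kappa}$ is a ring homomorphism (restriction followed by evaluation), it suffices to specialize each factor. We have $T\mathcal{C}_{\omega}=\mathcal{V}_{\omega}[0]$, because $\mathcal{C}_{\omega}$ is a component of the fixed locus of $\wA$. Similarly, $\Theta_{\phi}(\mathcal{V}_{\omega})=\prod_{m}\Theta_{\phi}(\mathcal{V}_{\omega}[m])$, since $\Theta_{\phi}$ is multiplicative in direct sums. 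The factor $\text{sp}_{\kappa}(\mathcal{E}_{\omega,H})$ is some ordinary class in $H^{*}(\mathcal{C}_{\omega})$, and we simply absorb it into $\Upsilon_{\omega,H}$.

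The key factor is $m=-2$. By the discussion preceding the statement, the equivariant Chern roots of $E_{\omega}=\mathcal{V}_{\omega}[-2]$ specialize to $x_{i}-1$, where the $x_{i}$ are the ordinary Chern roots. Since $\phi(s)=(s^{2}-1)\xi(s)=(s+1)(s-1)\xi(s)$, we write $\phi(s)=(s+1)\psi(s)$ with $\psi(s)=(s-1)\xi(s)$. The function $\psi$ is holomorphic near $s=-1$, because $\xi$ has poles only at $0,1$. The class $\prod_{i}\phi(y_{i}^{\dT})$ then specializes to
\[
\prod_{i}\bigl(x_{i}\bigr)\cdot\prod_{i}\psi(x_{i}-1)=e(E_{\omega})\cdot\prod_{i}\psi(x_{i}-1).
\]
The second product is a symmetric power series in the nilpotent $x_{i}$, hence a well-defined class in $H^{*}(\mathcal{C}_{\omega})$. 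This is exactly the simple zero of $\phi$ at $-1$ producing the Euler class.

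The remaining factors must be shown to specialize to honest ordinary classes. For $m\neq0$, the roots specialize to $x_{i}+m/2$, and $\phi$ is holomorphic at $m/2\neq0$. Thus $\Theta_{\phi}(\mathcal{V}_{\omega}[m])$ specializes to a power series in nilpotents, which is fine.

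The main obstacle is $m=0$. There $\phi$ has a simple pole at $0$, so $\Theta_{\phi}(\mathcal{V}_{\omega}[0])$ alone is not an analytic class at $\rho_{\kappa}$. The remedy is to group it with $e_{\dT}(T\mathcal{C}_{\omega})=\prod_{i} y_{i}^{\dT}$, which is built on the same bundle $\mathcal{V}_{\omega}[0]$. Their product is $\prod_{i} s\phi(s)\vert_{s=y_{i}^{\dT}}$, and $s\phi(s)=s(s^{2}-1)\xi(s)$ is holomorphic at $0$; this is precisely why the factor $e_{\dT}(T\mathcal{C}_{\omega})$ was inserted into $\Xi_{\omega,H}$. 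So $e_{\dT}(T\mathcal{C}_{\omega})\,\Theta_{\phi}(\mathcal{V}_{\omega}[0])$ is an analytic class, and it specializes to $\prod_{i}(s\phi)(x_{i})$, where the $x_{i}$ are now the Chern roots of $T\mathcal{C}_{\omega}$.

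One care point: this regrouping must be justified at the level of the analytic classes in $H_{\dT,\text{an}}^{*}(\mathcal{C}_{\omega})_{\rho_{\kappa}}$, before specializing. This is done by checking at $\dT$-fixed points via Theorem \ref{thm:localization-package}(a), or equivalently by noting that symmetric holomorphic functions of Chern roots give analytic classes by the splitting principle.

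Collecting the factors, we set
\[
\Upsilon_{\omega,H}=\text{sp}_{\kappa}(\mathcal{E}_{\omega,H})\cdot\prod_{\mathcal{V}_{\omega}[0]}(s\phi)(x_{i})\cdot\prod_{E_{\omega}}\psi(x_{i}-1)\cdot\prod_{m\neq0,-2}\;\prod_{\mathcal{V}_{\omega}[m]}\phi\!\left(x_{i}+\tfrac{m}{2}\right).
\]
This yields $\text{sp}_{\kappa}(\Xi_{\omega,H})=e(E_{\omega})\,\Upsilon_{\omega,H}$.
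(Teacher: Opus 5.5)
Your proposal is correct and is essentially the paper's proof. You decompose $\mathcal{V}_{\omega}$ by $\wA$-weights, pair the weight-zero block with $e_{\dT}(T\mathcal{C}_{\omega})$ so that $s\phi(s)$ removes the pole at $0$, extract $e(E_{\omega})$ from the simple zero of $\phi$ at $-1$ in the weight $-2$ block, and use holomorphy of $\phi$ at the nonzero values $m/2$, together with analyticity of the exponential class, for everything else; your explicit formula for $\Upsilon_{\omega,H}$ and your remark that the regrouping must happen among analytic $\dT$-classes before specializing are slightly more explicit than the paper's sketch, but the route is the same.
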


\begin{proof}
This is a simple computation involving Chern roots. Decompose 
\[
\mathcal{V}_{\omega}=\bigoplus_{m\in\mathbb{Z}}\mathcal{V}_{\omega}[m]
\]
by $\wA$-weights. The computation is split into steps. 

\emph{The zero-weight tangent block.} If $x_{1}^{\wA},\dots,x_{\#}^{\wA}$
are the equivariant chern roots of $\mathcal{V}_{\omega}[0]=T\mathcal{C}_{\omega}$
and $\phi^{\circ}(s):=s\phi(s)$, then 
\[
e_{\wA}\left(T\mathcal{C}_{\omega}\right)\Theta_{\phi}(\mathcal{V}_{\omega}[0])=\prod_{j=1}^{\#}\phi^{\circ}(x_{j}^{\wA})
\]
Since $\phi^{\circ}$ is holomorphic at the origin, we have 
\[
\text{sp}_{\kappa}\left(e_{\wA}\left(T\mathcal{C}_{\omega}\right)\Theta_{\phi}(\mathcal{V}_{\omega}[0])\right)\in H^{*}(\mathcal{C}_{\omega})
\]

\emph{The weight $-2$ block.} If $x_{1}^{\wA},\dots,x_{\#}^{\wA}$
are the equivariant Chern roots of $E_{\omega}:=\mathcal{V}_{\omega}[-2]$,
then
\[
x_{j}^{\wA}=x_{j}-2z
\]
where $x_{1},\dots,x_{\#}$ are the ordinary Chern roots of $E_{\omega}$.
Let $\phi(s)=(s+1)\phi'(s)$, then
\[
\text{sp}_{\kappa}\left(\Theta_{\phi}(\mathcal{V}_{\omega}[-2])\right)=\prod_{j=1}^{\#}x_{j}\cdot\phi'(-1+x_{j})=e(E_{\omega})\cdot(\text{another class})
\]
since $\phi'$ is holomorphic at $-1$. 

Since $\phi$ is holomorphic on $\mathbb{R}-\{0\}$ and the exponential
factor is an analytic class, we have 
\[
\text{sp}_{\kappa}\left(\Xi_{\omega,H}\right)=e(E_{\omega})\Upsilon_{\omega,H}
\]
for some $\Upsilon_{\omega,H}\in H^{*}(\mathcal{C}_{\omega})$. 
\end{proof}
We now finish the proof of the main Theorem \ref{thm:main}. 
\begin{thm*}
Let $\kappa:\slC\to\dG$ be a distinguished homomorphism with $h_{\kappa}=\kappa\vert_{\mathbf{G}_{m}}$
dominant. Let $\rho_{\kappa}=h_{\kappa}/2\in\da$. The non-zero automorphic
form $E^{*}(\rho_{\kappa})$ is square-integrable. 
\end{thm*}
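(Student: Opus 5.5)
The plan is to check the hypothesis of Proposition \ref{prop:Langlands-criterion}. Fix $\omega\in W/W_{\kappa}$ with $\omega\cdot\rho_{\kappa}\notin-\mathscr{C}_{\text{rt}}$; we must show $P_{\omega}(H)=0$ for every $H\in\mathfrak{a}$. Non-vanishing of $E^{*}(\rho_{\kappa})$ is already known from the cited results (Jacquet, Hegde), so only square-integrability needs proof.

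\emph{Step 1: integral representation.} By Proposition \ref{prop:equivariant-integral-rep}, for $\Lambda=\rho_{\kappa}+\nu$ near $\rho_{\kappa}$ we have
\[
J(\Lambda)\Bigl(\int_{\mathcal{C}_{\omega}}^{\dT}\Xi_{\omega,H}\Bigr)(\Lambda)=P_{\omega}(\nu,H)e^{\langle\rho+\omega\rho_{\kappa},H\rangle}.
\]
Letting $\nu\to0$, the right side tends to $P_{\omega}(H)e^{\langle\rho+\omega\rho_{\kappa},H\rangle}$. By the proposition on $J$, the factor $J$ is holomorphic and nonzero at $\rho_{\kappa}$. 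Hence it suffices to show that $\bigl(\int^{\dT}_{\mathcal{C}_{\omega}}\Xi_{\omega,H}\bigr)(\rho_{\kappa})=0$.

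\emph{Step 2: specialization.} I will first check that $\Xi_{\omega,H}$ lies in $H^{*}_{\dT,\text{an}}(\mathcal{C}_{\omega})_{\rho_{\kappa}}$, i.e.\ that it is analytic and not merely meromorphic. The only possible pole of $\phi$ is at $0$. That pole arises only from the $\wA$-weight-zero block $\mathcal{V}_{\omega}[0]=T\mathcal{C}_{\omega}$, and there it is cancelled by the factor $e_{\dT}(T\mathcal{C}_{\omega})$ through $\phi^{\circ}(s)=s\phi(s)$.

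This is the main obstacle. The argument in Proposition \ref{prop:specialization} is phrased with $\wA$-equivariant roots, but the specialization formula demands $\dT$-analyticity at $\rho_{\kappa}$. So I must argue that for the $\dT$-roots of $T\mathcal{C}_{\omega}$ (weights $\pm\gamma^{\vee}$ with $\gamma^{\vee}$ vanishing on $\rho_{\kappa}$) the product $\prod\phi^{\circ}$ is a symmetric analytic expression in the Chern roots. It would therefore define an analytic class, for instance via the GKM condition of Theorem \ref{thm:localization-package}(a), as was done for $\text{Exp}_{H}$. The remaining weights $\langle\rho_{\kappa},\gamma^{\vee}\rangle\neq0$ keep $\phi$ away from its pole near $\rho_{\kappa}$, so those factors are analytic.

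Granting this, naturality gives
\[
\Bigl(\int_{\mathcal{C}_{\omega}}^{\dT}\Xi_{\omega,H}\Bigr)(\rho_{\kappa})=\int_{\mathcal{C}_{\omega}}\text{sp}_{\kappa}(\Xi_{\omega,H}).
\]

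\emph{Step 3: vanishing.} By Proposition \ref{prop:specialization}, $\text{sp}_{\kappa}(\Xi_{\omega,H})=e(E_{\omega})\Upsilon_{\omega,H}$. By Theorem \ref{thm:euler-vanish}, which uses that $\kappa$ is distinguished together with $\omega\cdot\rho_{\kappa}\notin-\mathscr{C}_{\text{rt}}$, we have $e(E_{\omega})=0$ in $H^{*}(\mathcal{C}_{\omega})$. The integral is therefore $0$, and so $P_{\omega}(H)=0$ for all $H$.

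Since this holds for every such $\omega$, Proposition \ref{prop:Langlands-criterion} yields $E^{*}(\rho_{\kappa})\in L^{2}[G]$.
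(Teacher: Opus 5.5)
Your proposal is correct and follows the paper's proof step for step: the equivariant integral representation of $P_{\omega}$, specialization at $\rho_{\kappa}$, factorization of $\text{sp}_{\kappa}(\Xi_{\omega,H})$ through $e(E_{\omega})$, vanishing of that Euler class by Theorem \ref{thm:euler-vanish}, and then Langlands' criterion. Your Step 2 goes slightly beyond the paper: it checks that $\Xi_{\omega,H}$ is a genuinely $\dT$-analytic class at $\rho_{\kappa}$, using holomorphy of $\phi^{\circ}$ at $0$ and a GKM check on the $\wA$-weight blocks, which the paper uses implicitly when it applies the specialization formula.
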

\begin{proof}
The constant term of $E^{*}(\rho_{\kappa})$ along $B$ is
\[
CE^{*}(\rho_{\kappa};g)=\sum_{\omega\in W/W_{\kappa}}P_{\omega}(H)e^{\langle\rho+\omega\rho_{\kappa},H\rangle},\quad H:=H(g)
\]
From Proposition \ref{prop:specialization}, we have 
\[
P_{\omega}(H)e^{\langle\rho+\omega\rho_{\kappa},H\rangle}=J(\rho_{\kappa})\int_{\mathcal{C}_{\omega}}e(E_{\omega})\Upsilon_{\omega,H}
\]
Theorem \ref{thm:euler-vanish} gives
\[
\omega\rho_{\kappa}\notin-\mathscr{C}_{\text{rt}}\quad\implies\quad e(E_{\omega})=0
\]
and hence 
\[
P_{\omega}=0\quad\text{if}\quad\omega\rho_{\kappa}\notin-\mathscr{C}_{\text{rt}}.
\]
By Langlands' criterion (Proposition \ref{prop:Langlands-criterion})
\[
E^{*}(\rho_{\kappa})\in L^{2}[G].
\]
\end{proof}
\printbibliography

\end{document}